\newtheorem{thm}{Theorem}
\newtheorem{lemma}[thm]{Lemma}
\newtheorem{notation}[thm]{Notation}
\newtheorem{prop}[thm]{Proposition}
\newtheorem{cor}[thm]{Corollary}
\newtheorem{ques}[thm]{Question}
\theoremstyle{definition}
\newtheorem{defn}[thm]{Definition}
\newtheorem{remark}[thm]{Remark}
\numberwithin{thm}{section}
\DeclareMathOperator{\depth}{depth}
\DeclareMathOperator{\Tor}{Tor}
\newcommand{\m}{\mathfrak{m}}
\newcommand{\n}{\mathfrak{n}}
\renewcommand{\phi}{\varphi}
\DeclareMathOperator{\height}{ht}
\DeclareMathOperator{\Ht}{ht}
\renewcommand{\to}{\longrightarrow}
\newcommand{\Aa}{A_{\infty,0}}
\newcommand{\Ab}{A_{\infty}}
\newcommand{\Ba}{B\otimes_AA_{\infty}}
\newcommand{\Am}{\mathcal A}
\newcommand{\Bm}{\mathcal B}
\newcommand{\etale}{{\'e}tale }
\title{Extended Plus Closure in Complete Local Rings}
\author{Raymond C Heitmann and Linquan Ma}
\date{\today}
\thanks {The second author was supported in part by an NSF Grant \#1836867/1600198 and an NSF CAREER Grant DMS \#1252860/1501102 when preparing this article. Both authors would like to thank Mel Hochster and Kazuma Shimomoto for their comments.}
\begin{document}
\maketitle

\begin{abstract}
The (full) extended plus closure was developed as a replacement for tight closure in mixed characteristic rings.
Here it is shown by adapting Andr\'{e}'s perfectoid algebra techniques that, for complete local rings {that have F-finite residue fields}, this closure has the colon-capturing property.
In fact, more generally, if $R$ is a (possibly ramified) complete regular local ring of mixed characteristic {that has an F-finite residue field}, $I$ and $J$ are ideals of $R$, and the local domain $S$ is a finite $R$-module, then $(IS:J)\subseteq (I:J)S^{epf}$.
A consequence is that all ideals in regular local rings are closed, a fact which implies the validity of the direct summand conjecture and the Brian\c con-Skoda theorem in mixed characteristic.
\end{abstract}

\section{Introduction}

In \cite{H2}, the first author introduced several closure operations for mixed characteristic rings, most notably the full extended plus closure.
These closures were proposed as possibilities to play the same role in mixed characteristic as the tight closure does in characteristic $p$.
It was hoped that they might allow us to prove some of the homological conjectures that remained unresolved in mixed characteristic and indeed, the full extended plus closure did figure in the proof of the direct summand conjecture in dimension 3 \cite{H3}.

Of course, the direct summand conjecture has been resolved \cite{A2} without the use of this closure operation.
However, it still remains of interest to note to what extent the full extended plus closure fills the void left open by the absence of tight closure in mixed characteristic.
In this paper, we note a number of properties that were shown in \cite{H2} and add several others.
In particular, we deal with two fundamental questions that were left open in \cite{H2}.
We shall see that the full extended plus closure has the colon-capturing property in complete local rings {that have F-finite residue fields} and that ideals in all regular local rings are closed.
The latter fact is sufficient to imply the direct summand conjecture.
This is not truly an alternate proof as our work makes use of Andr\'{e}'s notation and techniques.
For the most part, this article is about the full extended plus closure in complete local rings.
Since we do not know whether or not completing, closing, and contracting occasionally gives a larger closure, only Theorem~\ref{RLR} gives new information about rings which are not complete.

The primary result in this article is that, in a sense, the full extended plus closure captures obstructions to flatness.
{Throughout this article, when we refer to a set as \textit{parameters} in a local ring, we mean they are a subset of a full system of parameters.}
Our principal theorems are:

\begin{thm} [Corollary~\ref{CC}]
Let $S$ be a complete local domain of mixed characteristic {that has an F-finite residue field} and let $y_1,\ldots,y_n$ be parameters in $S$.
Then $((y_1,\ldots,y_{n-1})S:y_n)\subseteq (y_1,\ldots,y_{n-1})S^{epf}$.
\end{thm}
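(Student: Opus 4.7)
The plan is to deduce this as the special case $I=(y_1,\ldots,y_{n-1})$, $J=y_n$ of the more general statement from the abstract: for a complete regular local ring $R$ of mixed characteristic with $F$-finite residue field, a finite $R$-algebra domain $S$, and ideals $I,J\subseteq R$, one has $(IS:J)\subseteq (I:J)S^{epf}$. The reduction is by Cohen's structure theorem, realizing $S$ as a module-finite extension of some $R=V[[x_1,\ldots,x_{d-1}]]$ with $V$ a complete, possibly ramified, DVR having $F$-finite residue field, together with a preliminary change of parameters so that $y_1,\ldots,y_n$ may be assumed to lie in $R$ as part of a regular system of parameters.

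For the general statement, the strategy is to run through André's perfectoid machinery. One builds a perfectoid $R$-algebra $\Ab$ by adjoining compatible $p^{\infty}$-th roots of $p$, of the regular parameters $x_i$, and (in the ramified case) of a uniformizer of $V$, together with the almost-faithfully-flat refinements $\Ac$ used in almost purity and in flat base change. The central property to exploit is almost Cohen-Macaulayness: a full system of parameters of $R$ is an almost regular sequence on $\Ab$, and hence on $\Ba$ after base change to $S$. Concretely, if $z\in((y_1,\ldots,y_{n-1})S:y_n)$, then for every sufficiently small $\epsilon$ in the relevant almost setup one obtains a relation
\[
\epsilon z \in (y_1,\ldots,y_{n-1})\Ba.
\]
Approximating the coefficients of such a relation by elements of $S^+$ modulo high powers of $p$ then yields equations with precisely the $\epsilon_N$-type multipliers ($p^{\infty}$-power roots of a fixed nonzero $c$) that appear in the definition of the full extended plus closure, giving $z\in(y_1,\ldots,y_{n-1})S^{epf}$.

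The main obstacle is this last translation step: converting almost-vanishing in the $p$-adically completed perfectoid tower into genuine $S^+$-level equations whose multipliers have the precise form dictated by $S^{epf}$. The $F$-finite hypothesis on the residue field is essential here, as it underlies André's construction of the perfectoid algebra $\Ab$ in a form suitable for almost purity; ramification of $V$ requires additional bookkeeping in selecting compatible roots of a uniformizer together with the roots of the $x_i$. A secondary subtlety is verifying that the reduction of parameters in $S$ to parameters coming from a regular subring $R$ is compatible with the closure operation $(-)^{epf}$, but this should follow from standard persistence properties of the full extended plus closure under module-finite extensions.
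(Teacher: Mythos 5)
The central step of your reduction does not work: you cannot assume, after ``a preliminary change of parameters,'' that $y_1,\ldots,y_n$ lie in the regular subring $R$ furnished by Cohen's structure theorem, let alone form part of a regular system of parameters there. The colon ideal $((y_1,\ldots,y_{n-1})S:y_n)$ depends on the actual elements $y_i\in S$, and parameters of $S$ are in general not expressible as (or replaceable by) elements of $R$ without changing the statement; note also that the general theorem you want to quote, $(IS:J)\subseteq (I:J)S^{epf}$, is proved in the paper only for $I,J$ contained in the regular ring, so the case you need is exactly the one your reduction assumes away. Handling parameters of $S$ that do not come from $R$ is the real content of the paper's proof of Corollary~\ref{CC}: it first establishes, by Lemma~\ref{Bmalmostflat}, the almost colon-capturing $(pg)^{1/p^\infty}((y_1,\ldots,y_{n-1})\Bm^\circ:y_n)\subseteq (y_1,\ldots,y_{n-1})\Bm^\circ$ when all $y_i\in R$, and then runs a delicate induction on the number of $y_i$ outside $R$ (replacing $y_n\notin R$ by a suitable $w_n\in (y_1,\ldots,y_n)S\cap R$ avoiding the minimal primes of $(y_1,\ldots,y_{n-1})S$, and, when $y_1\notin R$, manipulating a relation $a_1y_1+\cdots+a_ny_n=0$ to trade $\epsilon$ for $2\epsilon$). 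Your proposal contains no substitute for this argument, so the deduction of the stated theorem from the $I,J\subseteq R$ case is a genuine gap. There is also a circularity risk in your framing: in the paper the ramified case of the general theorem (Theorem~\ref{new main}) is itself deduced from Corollary~\ref{CC} (almost flatness of $\Bm^\circ$ over the ramified $R'$ uses that systems of parameters of $S$ are almost regular sequences), so invoking a ramified version of the general statement to prove colon-capturing would require an independent construction of the perfectoid algebra over a ramified base, which your sketch only gestures at.

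A secondary point: the step you yourself flag as ``the main obstacle''--converting almost relations in the $p$-adically completed perfectoid (or $R^{pg}$-type) object into genuine $(I,p^N)S^+$ memberships with multipliers $c^\epsilon$--is not optional bookkeeping; it is handled in the paper by Lemma~\ref{new complete}, whose proof needs the intersection statement of Lemma~\ref{intersection} and the Brian\c con--Skoda-type Proposition~\ref{BS} in $R^+$. Listing it as an obstacle without an argument leaves a second gap, though this one could be filled by quoting that lemma once the first, more serious, issue is repaired.
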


\begin{thm} [Theorem~\ref{new main}]
Let $R$ be a complete regular local ring of mixed characteristic {that has an F-finite residue field} and let the integral domain $S$ be a finite extension of $R$.
If $I$ is an ideal of $R$ and $J\subseteq R$, then $(IS:J)\subseteq (I:J)S^{epf}$.
\end{thm}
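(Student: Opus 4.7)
My strategy is to bootstrap from the parameter colon-capturing result (Corollary~\ref{CC}) to arbitrary ideals, using a ``generic lifting'' change of rings.

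First I would reduce to the principal case $J=(y)$. Since $(I:J)=\bigcap_{y\in J}(I:y)$ and $J$ is finitely generated (as $R$ is Noetherian), if the principal case is established then $s\in\bigcap_{y\in J}(I:y)S^{epf}$; I would then verify the (mild) compatibility of the closure with this finite intersection, for instance by finding a single closure witness that works simultaneously for the finitely many $y$'s. So assume $J=(y)$, that $y\ne 0$ (else the statement is trivial), and that $s\in S$ satisfies $sy=\sum_{i=1}^m b_i a_i$ with $b_i\in S$ and $I=(a_1,\dots,a_m)$.

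Next I would form $R'=R[[Z_1,\dots,Z_m]]$, which is still a complete regular local ring of mixed characteristic with F-finite residue field, and $S'=S[[Z_1,\dots,Z_m]]$, which is a module-finite extension of $R'$ and is a domain (since $S$ is). The surjection $\pi\colon R'\twoheadrightarrow R$, $Z_i\mapsto a_i$, has kernel $K=(Z_1-a_1,\dots,Z_m-a_m)$, generated by a regular sequence. Lifting the given relation yields
\[
sy-\sum b_i Z_i\in KS',\qquad\text{so}\qquad sy\in(Z_1,\dots,Z_m)S'+KS'.
\]
Now $Z_1,\dots,Z_m,y$ form a partial system of parameters in $R'$: modding out the $Z_i$ gives $R$, in which $y\ne 0$ is a nonzerodivisor. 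The plan is to apply Corollary~\ref{CC} to these parameters inside $S'$, splitting the relation into its ``generic'' part in $(Z_i)S'$ and its ``error'' part in $KS'$. Parameter colon-capturing handles the first part; for the second, one uses that $K$ is itself generated by parameters of $R'$ (being a lift of $0\subseteq R$), so the $K$-contribution can be controlled by a further application of CC. Combining these and pushing down along $\pi$ (under which $(Z_1,\dots,Z_m)$ maps onto $I$, while $K$ maps to $0$ but its presence in the colon contributes elements of $(I:y)$ beyond $I$), one would conclude $s\in(I:y)S^{epf}$.

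The main obstacle will be Step~3 and the subsequent descent. The naive quotient $R'/K\cong R$ collapses the generic ideal $(Z_i)$ onto $I$ and destroys the distinction between $I$ and $(I:y)$, so one cannot simply apply CC in $R'$ and reduce modulo $K$. Instead, one must carry out the parameter colon-capturing in $R'$ itself, obtain a membership of the form $s\in(Z_1,\dots,Z_m)(S')^{epf}+KS'^{epf}$ (or something similar), and show that this exactly descends under $\pi$ to $(I:y)S^{epf}$. The technical heart of the argument is therefore the compatibility of the extended plus closure with the surjection $S'\twoheadrightarrow S$ modulo a regular sequence generated by ``differences'' $Z_i-a_i$: one needs each closure witness (an element $c$ and approximate $p$-power equations) in $(S')^{epf}$ to descend to a valid witness in $S^{epf}$, which will use the F-finiteness of the residue field and the perfectoid techniques of Andr\'e that power the proof of Corollary~\ref{CC}.
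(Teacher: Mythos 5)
Your proposal has a genuine gap at exactly the point you flag as ``the technical heart.'' After the generic lifting, the relation $sy-\sum b_iZ_i\in KS'$ says that $s\in\bigl((Z_1,\dots,Z_m)S'+KS':y\bigr)$, but the ideal $(Z_1,\dots,Z_m)S'+KS'=(Z_1,\dots,Z_m,a_1,\dots,a_m)S'$ is \emph{not} a parameter ideal of $R'$ (its height is at most $m+\height I$, not $2m$), and $y$ is typically a zerodivisor modulo it since $R'/\bigl((Z)+K\bigr)\cong R/I$. So Corollary~\ref{CC} does not apply to the combined relation, and no ``further application of CC to control the $K$-contribution'' is available: the elements $Z_1,\dots,Z_m,Z_1-a_1,\dots,Z_m-a_m,y$ are not jointly parameters. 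The hoped-for membership $s\in\bigl((Z)S'\bigr)^{epf}+\bigl(KS'\bigr)^{epf}$ is therefore never produced, and even if it were, pushing it down along $\pi$ would only yield containment in something like $IS^{epf}$ plus an uncontrolled contribution; nothing in the construction manufactures the colon $(I:y)$ on the right-hand side, which is the entire content of the theorem. Passing from colon-capturing for parameters to colon-capturing for arbitrary $I$ and $J$ genuinely requires a flatness-type statement about relations, not a single further use of CC. (A secondary gap: your reduction to principal $J$ needs $\bigcap_i\bigl((I:y_i)S^{epf}\bigr)\subseteq\bigl(\bigcap_i(I:y_i)\bigr)S^{epf}$, and extension of ideals followed by closure does not commute with finite intersections; a ``common witness $c$'' does not fix this, since $(I:y_1)S^+\cap(I:y_2)S^+$ need not equal $\bigl((I:y_1)\cap(I:y_2)\bigr)S^+$.)

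This missing bridge is precisely what the paper supplies. Writing $R'$ (their notation) as a finite extension of an unramified complete regular local ring $R$ and invoking Notation~\ref{Notation}, the paper proves the Claim that $\Bm^\circ$ is $(pg)^{1/p^\infty}$-almost flat over the ramified regular ring $R'$: by descending induction on $i$ one shows $\Tor_i^{R'}(N,\Bm^\circ)$ is almost zero for all finitely generated $N$, reducing via prime cyclic filtrations to $N=R'/P$ and comparing with the Koszul homology of a regular sequence $x_1,\dots,x_h\in P$, which is almost zero because parameters form almost regular sequences on $\Bm^\circ$ by Corollary~\ref{CC}. Almost flatness then gives $(pg)^{1/p^\infty}(I\Bm^\circ:J)\subseteq(I:J)\Bm^\circ$ for arbitrary $I$ and finitely generated $J$ (this also makes the principal-$J$ reduction unnecessary), and the conclusion follows from the almost map $\Bm^\circ\to S^{pg}$ of Lemma~\ref{almost map} together with Lemma~\ref{new complete}. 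If you want to salvage your outline, the repair is to prove exactly such an (almost) flatness statement for a big algebra over $R$ from CC; but that is the paper's argument rather than the generic lifting, which by itself cannot recover $(I:y)$ from parameter colons.
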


{We would also like to state an equivalent formulation of Theorem 1.2, divorced from the language of extended plus closure.
Before doing so however, we should explain some terminology.
First, if $R$ is any integral domain, $R^+$, refers to the integral closure of $R$ in some algebraic closure of its quotient field.
$R^+$ is commonly referred to as the absolute integral closure.
Also, throughout this article, we will often use the phrase ``for all $\epsilon>0$".
It is assumed that $\epsilon$ is a positive rational number as $c^\epsilon$ is not otherwise defined.
Readers who routinely work in almost mathematics may harmlessly assume $\epsilon=\frac 1{p^n}$.}

\begin{thm} [Theorem~\ref{Tor}]
Let $R$ be a complete regular local ring of mixed characteristic {that has an F-finite residue field} and let the integral domain $S$ be a finite extension of $R$.
Let $M$ be an $R$-module.
Then there exists a nonzero $c\in R$ such that the natural map $c^{\epsilon}\Tor_i^R(S,M)/p^N\to \Tor_i^R(S^+,M)/p^N$ is zero for all $i\geq 1$ and for every $N,\epsilon>0$. Consequently, for every $\alpha\in \Tor_i^R(R^+,M)/p^N$, there exists a nonzero $c\in R$ such that $c^{\epsilon}\alpha=0$ for every $\epsilon$.
\end{thm}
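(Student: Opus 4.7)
My plan is to translate the colon-capturing result of Theorem~\ref{new main} into the stated Tor-vanishing statement via free resolutions and induction on the homological degree. The ``consequently'' clause is a formal corollary: any $\alpha \in \Tor_i^R(R^+, M)/p^N$ is represented by a cycle supported on finitely many basis elements of a fixed resolution, hence comes from $\Tor_i^R(S, M')/p^N$ for a sufficiently large finite domain extension $S$ of $R$ and a finitely generated submodule $M' \subseteq M$; applying the main statement to $(S, M')$ and pushing forward to $\Tor_i^R(R^+, M)/p^N$ yields the desired $c$ for $\alpha$. So the core problem is the main statement for finitely generated $M$.

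For finitely generated $M$, I would first use dimension shifting to reduce to the case $i = 1$. From the free presentation $0 \to K \to F_0 \to M \to 0$ with $F_0$ free and $K$ finitely generated, one has $\Tor_i^R(S, M) \cong \Tor_{i-1}^R(S, K)$ for $i \geq 2$; iterating (and using that $R$ is regular, so the iterated syzygies eventually become free and the higher Tor vanishes outright), the problem is reduced to $i = 1$ applied to the successive syzygy modules. For the $i = 1$ case I would further reduce to $M = R/I$ via a prime filtration $0 = M_0 \subset \cdots \subset M_k = M$ with $M_j/M_{j-1} \cong R/\p_j$: using the long exact sequence of Tor (and the dimension-shifted bounds on $\Tor_{\geq 2}$ already obtained), a uniform $c$ for each of the finitely many $R/\p_j$ combines into a uniform $c$ for $M$.

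In this core case of $\Tor_1^R(S, R/I)$, a cycle is a tuple $(s_1, \ldots, s_n) \in S^n$ with $\sum s_j y_j = 0$, where $(y_1, \ldots, y_n)$ generate $I$. Since $s_n y_n \in (y_1, \ldots, y_{n-1}) S$, Theorem~\ref{new main} gives $c^\epsilon s_n \in ((y_1,\ldots,y_{n-1}) : y_n) R^+ + p^N R^+$; each generator of the colon ideal produces an explicit $R$-syzygy of $(y_1, \ldots, y_n)$, and subtracting the appropriate $R^+$-combination from $c^\epsilon (s_1,\ldots,s_n)$ pushes the last coordinate into $p^N R^+$. Iterating on the remaining coordinates, with the colon ideals now of the form $((y_1,\ldots,y_{j-1}, p^N) : y_j)$ -- this is precisely where the full generality of Theorem~\ref{new main} (rather than only its parameter version, Corollary~\ref{CC}) is essential -- and absorbing the compounded exponent into a single $\epsilon$, the entire cycle becomes a boundary modulo $p^N$, which is the required vanishing.

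The main obstacle I anticipate is the uniformity of $c$ across the many reduction steps: Theorem~\ref{new main} provides $c$ element-by-element in the closure, but we need one constant that controls every cycle, every $\epsilon$, and every $N$ simultaneously. Finite generation of the syzygy and Tor modules over $R$ should permit taking $c$ to be a product of the finitely many constants arising from generators at each stage; the delicate point is arranging the bookkeeping across the coordinate reductions, the dimension shifts, and the prime-filtration steps so that the final exponent of $c$ remains proportional to $\epsilon$ rather than growing with the complexity of the cycle or the depth of the induction.
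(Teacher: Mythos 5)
Your plan has a genuine gap, and it sits exactly at the point you flag as ``delicate'': the uniformity of $c$ cannot be recovered from the statement of Theorem~\ref{new main}. In your core case, after the first application of colon-capturing you must choose, for \emph{each} pair $(\epsilon,N)$, a decomposition $c^{\epsilon}s_n=\sum_k u_kt_k+p^Nv$ with $u_k,v\in S^+$, and subtract the corresponding $S^+$-combination of $R$-syzygies. The new cycle entries therefore lie in $S^+$ (not $S$) and depend on $\epsilon$ and $N$. To continue the coordinate induction you must apply Theorem~\ref{new main} again to elements of some larger finite extension $S'\supseteq S$ that varies with $(\epsilon,N)$, and the constant it returns is tied to those elements (the epf definition only supplies a $c$ per element, and even the uniform constant $pg$ hidden in the proof of Theorem~\ref{main} changes when $S$ changes, since $g$ is chosen to make $R[(pg)^{-1}]\to S[(pg)^{-1}]$ \'etale). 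So the constants produced at the second and later stages depend on $\epsilon$ and $N$, which is fatal: the theorem requires one $c$ working for all $\epsilon$ and $N$ simultaneously. In effect you would need colon-capturing with a fixed constant over $S^+$ (or $S^{++}$) itself, i.e.\ an almost Cohen--Macaulayness statement for the huge ring, which is not among the results you are quoting. The same uniformity problem undercuts two of your reductions: for the main statement $M$ is an \emph{arbitrary} module, so the colimit reduction to finitely generated $M$ and the ``product of finitely many constants'' trick do not apply (your $c$ is built from generators of Tor and syzygy modules, hence depends on $M$, whereas the theorem needs $c$ depending only on $S$); and the prime-filtration step is carried out after truncating mod $p^N$, where the long exact sequence of Tor is no longer exact, so a class killed in $\Tor_1(S^+,R/\p_j)/p^N$ need not be moved into the image of the previous filtration step.

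The paper's proof avoids all of this by never working element-by-element: the Claim in the proof of Theorem~\ref{new main} shows $\Bm^\circ$ is $(pg)^{1/p^\infty}$-almost flat over $R'$ (using only Corollary~\ref{CC}, so your remark that the full ideal version of colon-capturing is essential is also not accurate), whence $(pg)^{1/p^\infty}\Tor_i^{R'}(\Bm^\circ,M)=0$ for all $i>0$ and \emph{all} $M$, with the single constant $pg$. The map $\Tor_i^{R'}(S,M)\to\Tor_i^{R'}(S^{pg},M)$ factors through $\Tor_i^{R'}(\Bm^\circ,M)$ by Lemma~\ref{almost map}, $S^{pg}$ is $(pg)^{1/p^\infty}$-almost isomorphic to $S^{++}$ by Lemma~\ref{new complete}, and the passage from $S^{++}$ back to $S^+$ mod $p^N$ is done once at the end via the injection $\Tor_i^{R'}(S^+,M)/p^N\hookrightarrow\Tor_i^{R'}(S^+/p^N,M)=\Tor_i^{R'}(S^{++}/p^N,M)$. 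If you want to salvage your outline, you should replace ``apply Theorem~\ref{new main} repeatedly over growing finite extensions'' by ``work once and for all over $\Bm^\circ$,'' which is essentially the paper's argument.
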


Our results also have implications regarding a question about local cohomology posed by P. Roberts, A. Singh, and V. Srinivas.
Suppose $(R,\m)$ is an excellent local domain of dimension $n$.
It is well known that $H_\m^i(R)=0$ for $i>n$ and that $H_\m^n(R)\neq 0$.
We also know that $H_\m^i(R)=0$ if $i<\depth R$ and so the lower local cohomology modules ($i<n$) ``measure" how close $R$ is to being Cohen-Macaulay.
M. Hochster and C. Huneke demonstrated \cite{HH4} that, in the equal characteristic $p$ case, all lower local cohomology modules $H_\m^i(R^+)$ vanish.
In a different setting, one of the implications of the proof of the mixed characteristic direct summand conjecture in dimension three \cite{H3} was the observation that $H_\m^2(R^+)$ is almost zero in the sense that it can be annihilated by arbitrarily small powers of $p$.
This hinted at an ``almost" form of the Hochster-Huneke result in the mixed characteristic case.
Motivated by this, P. Roberts, A. Singh, and V. Srinivas posed the following characteristic free question \cite[Question 1.5]{RSS}:
\begin{ques}\label{Q}
Let $(R,\m)$ be a complete local domain.
For $i< \dim R$, is the image of the natural map  $H_\m^i(R)\to H_\m^i(R^+)$ almost zero?
\end{ques}
Of course, as noted above, the question has an affirmative answer in the equicharacteristic $p$ case.
In their article, Roberts et al. considered a number of families of examples and produced considerable evidence in support of a positive answer in the equicharacteristic zero case.
In a somewhat different direction, H. Brenner and A. St\"abler \cite{BS} also explored almost zero cohomology in equicharacteristic zero.
However, until the present article, there has been no further progress in the mixed characteristic case.
Here we shall demonstrate that Question~\ref{Q} has an affirmative answer in the mixed characteristic case.

\begin{thm} [Theorem~\ref{Cohom}]
Let $(R,\m)$ be a complete local domain of mixed characteristic {that has an F-finite residue field}.
Then there exists a nonzero $c\in R$ such that the natural map  $c^\epsilon H_{\m }^i(R)\to H_{\m }^i(R^+)$ is zero for all $i<\dim R$ and all $\epsilon>0$.
\end{thm}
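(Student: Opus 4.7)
My plan is to deduce Theorem~\ref{Cohom} directly from Theorem~\ref{Tor} (the Tor-vanishing statement) by representing local cohomology as a direct limit of Koszul cohomology, and then exploiting the fact that $p$ can be taken to be one of the parameters.

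First I would use Cohen's structure theorem to write $R$ as a finite module over a complete regular local subring $A\subseteq R$ of the same dimension $d=\dim R$; the residue field of $A$ agrees with that of $R$, hence is F-finite. I pick a regular system of parameters $x_1,\dots,x_d$ of $A$ with $x_1=p$. Since $R$ is a finite extension of $A$ inside a common algebraic closure of fraction fields, the absolute integral closures agree, $R^+=A^+$. Because $A$ is regular, for every $t\ge 1$ the Koszul complex $K_\bullet(x_1^t,\dots,x_d^t;A)$ is a free $A$-resolution of $A/(x_1^t,\dots,x_d^t)$, and hence, for any $A$-module $M$ and any $j$,
\[
\Tor_j^A\!\bigl(A/(x^t),\,M\bigr)\;\cong\;H_j\!\bigl(K_\bullet(x^t;M)\bigr)\;\cong\;H^{d-j}\!\bigl(K^\bullet(x^t;M)\bigr)
\]
by Koszul self-duality. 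Passing to the direct limit over $t$ recovers local cohomology:
\[
H_\m^i(M)\;=\;\varinjlim_t H^i\!\bigl(K^\bullet(x^t;M)\bigr)\;=\;\varinjlim_t \Tor_{d-i}^A\!\bigl(A/(x^t),\,M\bigr).
\]

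Next I apply Theorem~\ref{Tor} to the finite extension $A\hookrightarrow R$ (so the ``$S^+$'' there is $R^+=A^+$) with $M=A/(x_1^t,\dots,x_d^t)$ and Tor index $d-i$: since $i<d$ gives $d-i\ge 1$, there exists a nonzero $c\in A$ such that, for every $\epsilon,N>0$,
\[
c^{\epsilon}\,H^i\!\bigl(K^\bullet(x^t;R)\bigr)\big/p^N\;\longrightarrow\;H^i\!\bigl(K^\bullet(x^t;R^+)\bigr)\big/p^N
\]
is the zero map. The crucial observation now is that $p^t=x_1^t$ is one of the entries of the Koszul sequence, so $p^t$ annihilates every Koszul cohomology module $H^i(K^\bullet(x^t;-))$. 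In particular $H^i(K^\bullet(x^t;R^+))/p^{t+1}=H^i(K^\bullet(x^t;R^+))$, so taking $N=t+1$ eliminates the ``$\bmod\,p^N$'' on the right and yields
\[
c^{\epsilon}\,H^i\!\bigl(K^\bullet(x^t;R)\bigr)\;\longrightarrow\;H^i\!\bigl(K^\bullet(x^t;R^+)\bigr)\quad\text{is zero.}
\]
Taking the direct limit over $t$ produces the desired vanishing $c^{\epsilon}H_\m^i(R)\to H_\m^i(R^+)=0$ for all $i<d$ and all $\epsilon>0$.

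The one step that requires more than the mere statement of Theorem~\ref{Tor} is the need for a single $c$ that works uniformly for the whole direct system $\{A/(x^t)\}_t$. As written, the quantifier order in Theorem~\ref{Tor} allows $c$ to depend on the module $M$, so I would expect to resolve this by inspecting the proof of Theorem~\ref{Tor}: typically the element $c$ is produced there from intrinsic data of the extension $A\subseteq R$ (a discriminant or an Andr\'e-style almost \'etale datum) and is manifestly independent of the auxiliary module $M$, which is precisely what is needed here. The remainder of the argument is then a short and clean consequence of the fact that $p$ can be chosen to be one of the parameters of $A$.
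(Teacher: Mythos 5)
Your argument is correct, but it takes a genuinely different route from the paper. The paper proves the statement directly with the perfectoid algebra $\Bm^\circ$: it chooses parameters including a power of $p$ that form a $(pg)^{1/p^\infty}$-almost regular sequence on $\Bm^\circ$ (Corollary~\ref{CC}), deduces that $(pg)^{1/p^\infty}H_I^i(\Bm^\circ)=0$ below the height, pushes this into $H_I^i(S^{++})$ via the almost map of Lemma~\ref{almost map}, and then uses Lemma~\ref{Cohomlemma} (invariance of $H_I^i$ under $p$-adic completion when $I$ contains a power of $p$) to pass from $S^{++}$ back to $S^+$; this yields the more general statement for any ideal $I$ containing a power of $p$, with the explicit element $c=pg$. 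You instead deduce the maximal-ideal case formally from Theorem~\ref{Tor}, writing $H_\m^i$ as a direct limit of Koszul cohomologies over an unramified regular Cohen subring $A$ with $p$ among the regular parameters, and using the observation that each Koszul cohomology of $x_1^t=p^t,\dots,x_d^t$ is killed by $p^t$, so the ``mod $p^N$'' in Theorem~\ref{Tor} can be stripped off by taking $N>t$; this neatly sidesteps the $S^{+}$ versus $S^{++}$ comparison (the paper's Lemma~\ref{Cohomlemma}), since that comparison has already been absorbed into Theorem~\ref{Tor}. The one caveat is the one you identify yourself: as stated, Theorem~\ref{Tor} lets $c$ depend on $M$, and you need a single $c$ for the whole system $\{A/(x_1^t,\dots,x_d^t)\}_t$. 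Your proposed fix is sound: in the paper's proof of Theorem~\ref{Tor} the element is $c=pg$, determined solely by the choice of $g$ making $A[(pg)^{-1}]\to R[(pg)^{-1}]$ finite \'etale, hence independent of $M$, $i$, $N$, and $\epsilon$. So your proof is complete once that uniformity is cited from the proof (rather than the statement) of Theorem~\ref{Tor}; what it buys is a shorter, more formal derivation of the $\m$-adic case, while the paper's direct argument buys the stronger statement for all ideals containing a power of $p$ and vanishing in all degrees below the height.
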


As a consequence, $H_{\m}^i(R^+)$ is almost zero if almost zero is interpreted in the sense of Roberts, Singh, and Srinivas.

This paper is organized as follows. In Section 2, we will give a brief introduction to extended plus closure.
The results discussed here are generally known although Proposition~\ref{TC} is actually new.
We will also give a short introduction to our terminology and conventions.
Section 3 will be devoted to our new results.

\section{basics}

\begin{notation}
If $I$ is an ideal of $R$ and $J\subseteq R$, $(I:J)=\{r\in R\mid rJ\subseteq I\}$.
The ring involved will always be clear from the ideal and there will be no need for the more cumbersome notation $(I:_RJ)$.
\end{notation}

 We recall that a ring $R$ of equal characteristic $p>0$ is called {\it F-finite} if the Frobenius map $F$: $R\to R$ makes $R$ into a module-finite $R$-algebra. In particular, a field $k$ of characteristic $p>0$ is F-finite if and only if $k^{1/p}$ is a finite dimensional vector space over $k$.

In \cite{H2}, the first author introduced four closure operations for mixed characteristic rings.
Subsequently, only one of these, the full extended plus closure, has received any attention.
That will also be the case in this article. In fact, while we shall preserve the notation $I^{epf}$, the $f$ for "full", we shall mostly refer to the closure as the \textit{extended plus closure}. It should be noted that the original definition and early results allow $R$ to be either mixed characteristic or characteristic $p$. However, the characteristic $p$ case is less interesting as the full extended plus closure trivially contains the tight closure.

{
\begin{notation}
As this article is concerned with the extended plus closure, we shall only be concerned with rings for which the extended plus closure makes sense.
Throughout (excepting only Theorem~\ref{L}), $p$ will be a specific prime integer and $R$ will be a ring such that the image of the prime integer $p$ is in the Jacobson radical of $R$, which we denote as $J(R)$.
If the image of $p$ is nonzero, we say that $R$ is a ring of mixed characteristic $p$.
\end{notation}}

\begin{defn}
Let $R$ be an integral domain {such that the image of the prime integer $p$ is in $J({R})$, and let }$I$ an ideal of $R$.
Then an element $x\in R$ is said to be in the (full) extended plus closure of $I$ (designated $x\in I^{epf}$) provided there exists $c\in R$ such that for every $\epsilon>0$ and every positive integer $N$, $c^{\epsilon}x\in (I,p^N)R^+$.
\end{defn}

\begin{remark}
The extended plus closure is actually {also} defined for all Noetherian rings {such that the image of the prime integer $p$ is in $J({R})$.}
Just as with tight closure, one may compute the extended plus closure by computing it modulo each minimal prime ideal and then taking the intersection of the liftings back to the original ring.
The more general setting does not hold great interest and in this article, we will focus on the case of integral domains.
\end{remark}

We first shall highlight some of the basic results from \cite{H2}. First we see that it behaves as a reasonable closure operation.

\begin{prop}\label{elem}\cite[Proposition 2.1]{H2}
Let $I,J$ be ideals in a Noetherian ring $R$.
\begin{enumerate}
  \item $I^{epf}$ is an ideal.
  \item  $I\subseteq I^{epf}$.
  \item   If $I\subseteq J$, $I^{epf}\subseteq J^{epf}$.
    \item $(I^{epf})^{epf}=I^{epf}$.
  \item    $(I^{epf}J^{epf})^{epf} =(IJ)^{epf}$.
  \item   $(I:J)^{epf}\subseteq (I^{epf}:J)$.
  In particular, if $I$ is closed, then $(I:J)$ is closed.
\end{enumerate}
\end{prop}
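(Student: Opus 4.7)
The plan is to verify each of the six parts in turn, relying on a single recurring technique: when $x$ can be expressed in terms of finitely many elements each possessing its own extended plus closure witness, the product of those witnesses serves as a common witness for $x$. The key fact enabling this is that $R$ is Noetherian, so every ideal, including $I^{epf}$, has only finitely many generators. Parts (2) and (3) are immediate: $c=1$ works for (2), and for (3) any witness for membership in $I^{epf}$ is also a witness for $J^{epf}$ since $(I,p^N)R^+\subseteq (J,p^N)R^+$. For part (1), closure under scalar multiplication is clear from $c^\epsilon(rx)=r(c^\epsilon x)$; closure under addition follows by taking $x_1,x_2\in I^{epf}$ with witnesses $c_1,c_2$ and observing that $(c_1 c_2)^\epsilon(x_1+x_2)=c_2^\epsilon(c_1^\epsilon x_1)+c_1^\epsilon(c_2^\epsilon x_2)\in (I,p^N)R^+$.

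For part (4), the containment $I^{epf}\subseteq (I^{epf})^{epf}$ is a special case of (2). For the reverse, let $x\in (I^{epf})^{epf}$ with witness $c$, and choose generators $b_1,\ldots,b_k$ of $I^{epf}$ together with individual witnesses $d_1,\ldots,d_k$. Put $d=d_1\cdots d_k$; then for each $i$ and all $\epsilon,N$, $d^\epsilon b_i=\bigl(\prod_{j\neq i}d_j^\epsilon\bigr)(d_i^\epsilon b_i)\in (I,p^N)R^+$. Given $\epsilon,N$, the identity $(I^{epf},p^N)R^+=(b_1,\ldots,b_k,p^N)R^+$ lets us write $c^\epsilon x=\sum \alpha_i b_i+p^N r$ with $\alpha_i,r\in R^+$, whence $(cd)^\epsilon x=\sum \alpha_i(d^\epsilon b_i)+p^N(d^\epsilon r)\in (I,p^N)R^+$. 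Thus $cd$ witnesses $x\in I^{epf}$. Part (5) follows the same pattern: the direction $(IJ)^{epf}\subseteq (I^{epf}J^{epf})^{epf}$ comes from (3), while for the reverse one first shows that $d_id_j'$ witnesses $b_ib_j'\in (IJ)^{epf}$ by expanding $(d_id_j')^\epsilon(b_ib_j')=(u+p^N v)(u'+p^N v')$ with $u\in IR^+$, $u'\in JR^+$, and then combines the finitely many such products into a single witness exactly as in (4).

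For part (6), let $c$ witness $x\in (I:J)^{epf}$ and take any $j\in J$. Writing $c^\epsilon x=u+p^N r$ with $u\in (I:J)R^+$, we obtain $c^\epsilon(xj)=uj+p^N(rj)$, and $uj\in (I:J)J\cdot R^+\subseteq IR^+$, so $c^\epsilon(xj)\in (I,p^N)R^+$. Hence the same $c$ witnesses $xj\in I^{epf}$, proving $x\in (I^{epf}:J)$; the ``in particular'' clause is the case $I=I^{epf}$ combined with (2). The main obstacle throughout is the step in (4), where a single witness must serve for all $\epsilon,N$ simultaneously even though the expression of $c^\epsilon x$ modulo $p^N R^+$ will generally involve different combinations of elements of $I^{epf}$ for different $\epsilon,N$; the resolution is to fix a finite generating set of $I^{epf}$ at the outset and use the multiplicative combination of its witnesses, which is the one place Noetherianity plays an essential role.
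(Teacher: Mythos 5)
Your proof is correct, and it is essentially the standard argument: the paper itself gives no proof of this proposition (it simply cites \cite[Proposition 2.1]{H2}), and your witness-multiplication technique --- products of witnesses for sums and for ring multiples, and, for (4) and (5), fixing a finite generating set of $I^{epf}$ (Noetherianity) and multiplying its finitely many witnesses into a single $d$ --- is exactly the argument of the cited source. The only points worth making explicit are that the witnesses $c$ must be nonzero (respectively, outside every minimal prime in the non-domain case, which is preserved under products), and that in a general Noetherian ring the definition is applied modulo each minimal prime as in the paper's remark; neither affects your argument.
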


\begin{remark}
It is known that if $R$ is local and $S=R^h$, the henselization of $R$, then $IS^{epf}\cap R=I^{epf}$ \cite[Theorem 3.4] {H2}. However, this result is unfortunately not known if we replace $S$ by the completion $\widehat{R}$ of $R$, even when $R$ is an excellent domain. For this reason our main theorems on extended plus closure do not obviously generalize to non-complete rings.
\end{remark}

Finally, we have the two most interesting theorems from the earlier work - two theorems which match consequential theorems from tight closure theory.
First full extended plus closure gives a Brian\c con-Skoda type theorem.

\begin{thm}  \cite[Theorem 4.2]{H2}
Let $R$ be {an integral domain}, let $I$ be an ideal of $R$ generated by $n$ elements, and suppose $y$ is in the integral closure of $I^{n+d}$ for some integer $d\geq 0$.
Then $y\in (I^{d+1})^{epf}$.
\end{thm}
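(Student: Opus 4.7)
The plan is to adapt the tight-closure proof of the classical Brian\c con-Skoda theorem to the extended plus closure setting, replacing Frobenius-power arguments with root-extraction in $R^+$. First, by the standard integral-closure characterization, there exists a nonzero $c \in R$ such that $cy^m \in I^{(n+d)m}$ for every $m \geq 1$; this $c$ will serve as the witness required by the definition of $(I^{d+1})^{epf}$.

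The combinatorial core is a pigeonhole inclusion. Writing $I = (f_1, \ldots, f_n)$, any monomial generator $f^\alpha$ of $I^{(n+d)m}$ satisfies $|\alpha| = (n+d)m$; writing $\alpha_i = m\gamma_i + r_i$ with $0 \leq r_i < m$, the bound $\sum r_i < nm$ forces $\sum \gamma_i \geq d+1$. Hence $I^{(n+d)m} \subseteq (f_1^m, \ldots, f_n^m)^{d+1}$, so specializing $m = p^e$ yields
\[ cy^{p^e} = \sum_{|\gamma| = d+1} u_\gamma (f^\gamma)^{p^e} \]
with $u_\gamma \in R$ and each $f^\gamma \in I^{d+1}$.

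Next, I would work in $R^+$ and extract $p^e$-th roots. Choose $\tilde c \in R^+$ with $\tilde c^{p^e} = c$ and $v_\gamma \in R^+$ with $v_\gamma^{p^e} = u_\gamma$, and set $z = \tilde c y$, $W = \sum_\gamma v_\gamma f^\gamma \in I^{d+1} R^+$. Then $z^{p^e} = \sum_\gamma (v_\gamma f^\gamma)^{p^e}$, and since every nontrivial multinomial coefficient $\binom{p^e}{\vec k}$ is divisible by $p$, expanding $W^{p^e}$ gives $W^{p^e} - z^{p^e} = pY$ with $Y \in I^{(d+1)p^e} R^+$. Factoring over $p^e$-th roots of unity yields $\prod_\zeta (W - \zeta z) = p Y$ inside $R^+$.

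The main obstacle is the last step: extracting from this product relation a membership of the form $W - \zeta_0 z \in (I^{d+1}, p^N) R^+$ for every $N$. Distributing the single factor of $p$ across the $p^e$ factors on the left gives at best divisibility by $p^{1/p^e}$, which is much weaker than divisibility by $p^N$. To bridge this gap I would bootstrap by applying the argument for successively larger $e$ and leveraging that $Y$ lies deep inside $I^{(d+1)p^e}$, with the idea that each round of root extraction trades some of the ``$I^{d+1}$-depth'' of $Y$ for additional $p$-adic precision. Absorbing a suitable root of unity into the choice of $\tilde c$ should then yield $c^{1/p^e} y \in (I^{d+1}, p^N) R^+$ for any $N$, and letting $e \to \infty$ provides the arbitrarily small $\epsilon = 1/p^e$ that the extended plus closure condition requires.
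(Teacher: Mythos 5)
Your first two steps are fine: the integral-closure characterization producing $c$ with $cy^m\in I^{(n+d)m}$, and the pigeonhole inclusion $I^{(n+d)m}\subseteq (f_1^m,\dots,f_n^m)^{d+1}$, are exactly the opening moves of the tight-closure proof and are correct (granting the standing Noetherian hypotheses). But the proof does not go through, and you have in fact located the failure yourself: from $\prod_{\zeta^{p^e}=1}(W-\zeta z)=pY$ you cannot recover anything like $W-z\in (I^{d+1},p^N)R^+$. The trouble is worse than ``the single $p$ gets diluted'': all $p^e$-th roots of unity are congruent to $1$ modulo any prime of $R^+$ lying over $p$ (indeed $1-\zeta$ has tiny positive value at such primes), so the $p^e$ factors $W-\zeta z$ are $p$-adically nearly indistinguishable, and the product relation only yields that each factor is divisible by roughly $p^{1/p^e}$ times a unit-scale ambiguity. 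Increasing $e$ makes this \emph{worse}, not better: the number of factors grows and the roots of unity cluster even closer to $1$, while the extra depth of $Y$ in $I^{(d+1)p^e}$ gives depth in the ideal $I$, which there is no mechanism to convert into $p$-adic precision on the fixed element $c^{1/p^e}y$. The ``bootstrap'' in your last paragraph is therefore a hope, not an argument, and it is precisely at this point that the entire difficulty of the mixed-characteristic statement is concentrated.

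For comparison, the paper does not prove this theorem by such an approximation at all: it quotes \cite[Theorem 4.2]{H2} and observes that it is a consequence of \cite[Theorem 2.13]{H1}, restated here as Proposition~\ref{BS}. That result is of a different nature: under the auxiliary hypothesis $p\in\sqrt{(x_1,x_2)R}$ it produces an honest integral extension $S$ with $z\in I^{k+1}S$ (no multiplier $c^\epsilon$ and no $p^N$ error term), and its proof is a delicate inductive construction of integral extensions by solving explicit equations, not an expansion of $p^e$-th powers followed by root extraction. The passage from that statement to the $epf$ statement for general $I$ is then comparatively formal (one works modulo powers of $p$ so that the hypothesis on $\sqrt{(x_1,x_2)}$ can be arranged), which is why the present paper can treat the theorem as known. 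If you want a complete argument along your lines, you would essentially have to reprove Heitmann's Theorem 2.13; the Frobenius-mimicking expansion you propose does not substitute for it.
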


We note that this Brian\c con-Skoda type theorem is a simple consequence of  \cite[Theorem 2.13]{H1}.
We include that result next as it serves as a lemma we will need in our proof of Lemma~\ref{new complete} and Theorem~\ref{L}.

\begin{prop}\label{BS}
Let $R$ be an integral domain and $I=(x_1,\ldots,x_n)R$ an ideal of $R$.
Suppose $p\in\sqrt{(x_1,x_2)R}$ and $z\in \overline {I^{n+k}}$ with $k\geq 0$.
Then there exists an integral extension $S$ of $R$ with $z\in I^{k+1}S$.
\end{prop}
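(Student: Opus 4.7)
The input from $z \in \overline{I^{n+k}}$ is an equation of integral dependence
\[
z^m + a_1 z^{m-1} + \cdots + a_m = 0
\]
with $a_j \in I^{j(n+k)}$. My plan is to use this equation, together with a passage to a suitable integral extension $S$ of $R$ (for instance a subring of $R^+$), to exhibit $z$ as an element of $I^{k+1} S$.

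The toy case $n=1$ is instructive. If $I=(x)$, then $a_j = x^{(k+1)j} b_j$ for some $b_j \in R$, and dividing the integral-dependence equation by $x^{m(k+1)}$ shows that $z/x^{k+1}$ is integral over $R$, hence lies in some finite integral extension $S$; this yields $z = x^{k+1}\cdot(z/x^{k+1}) \in I^{k+1}S$. For the general case I would adjoin $q$-th roots of each $x_i$ inside a large integral extension, so that $I^{n+k}$ sits inside a power of the ideal generated by the $x_i^{1/q}$ with a much higher exponent $q(n+k)$. The Briançon--Skoda combinatorics, sharpened by the ability to take roots, then allow one to split each monomial of degree $j(n+k)$ in the $x_i$ into a product of an $I^{k+1}$-piece and an element that is integral over $R$. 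Equivalently, one can work inside the normalization of the Rees algebra $R[It]$: the element $zt^{n+k}$ is integral over $R[It]$, and valuation-theoretic considerations in an integral extension force $z \in I^{k+1}S$.

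The hypothesis $p \in \sqrt{(x_1, x_2)R}$ is the mixed-characteristic input. It ensures $p^N \in (x_1, x_2)R$ for some $N$, so any $p$-power denominator that appears in the root-extraction or valuation step can be absorbed into $x_1, x_2$, keeping the final element inside $I^{k+1}S$. The principal obstacle is the bookkeeping of this reduction: one must carefully track which monomials in the $a_j$ require which roots, and verify that the resulting ring $S$ is genuinely integral over $R$ rather than merely birational. The exponent $n+k$ in the hypothesis is exactly what makes the combinatorics close to produce the ideal $I^{k+1}$, so any softening of this assumption would break the argument.
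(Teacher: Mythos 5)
The paper does not prove this proposition at all --- it is quoted from \cite[Theorem 2.13]{H1} --- so your argument has to stand on its own, and as written it does not. The one case you actually carry out, $n=1$, is precisely the case where the difficulty is absent: there $I^{j(n+k)}$ is principal, so the dependence equation can be divided by $x^{m(k+1)}$. For $n\geq 2$ there is nothing to divide by. Each coefficient $a_j$ is a sum of monomials of degree $j(n+k)$, and the ``splitting'' you propose for these monomials is vacuous: any monomial in the $x_i$ of degree at least $k+1$ already lies in $I^{k+1}$ with no roots adjoined, and knowing $a_j\in I^{k+1}$ says nothing about $z$. The only link between $z$ and the $a_j$ is the integral dependence equation, and neither of your mechanisms extracts anything from it: the root-adjunction combinatorics never acts on $z$ itself, while the Rees-algebra/valuation argument only certifies that $v(z)\geq (k+1)v(I)$ for all relevant valuations, i.e.\ that $z$ lies in the \emph{integral closure} of $I^{k+1}S$. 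Converting integral-closure membership into actual membership $z\in I^{k+1}S$ in some integral extension is exactly the content of the proposition, so that step is circular. (Also, adjoining roots of elements of $R$ automatically produces integral extensions, so the ``birational versus integral'' issue you flag as the principal obstacle is not where the difficulty lies.)

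Your reading of the hypothesis $p\in\sqrt{(x_1,x_2)R}$ as bookkeeping for $p$-power denominators is the clearest sign the approach cannot be completed: every tool you invoke (roots of the $x_i$, Rees algebras, valuations) is characteristic-free, but the statement is false without an essential use of $p$. Indeed, an integral extension $S$ may always be replaced by an integral extension domain sitting inside $R^+$, and for a normal domain $R$ containing $\mathbb{Q}$ the normalized field trace splits every module-finite extension domain, giving $I^{k+1}R^+\cap R=I^{k+1}$; so a characteristic-free proof would force the sharp Brian\c con-Skoda containment $\overline{I^{n+k}}\subseteq I^{k+1}$ in every such ring. This fails already for $R=\mathbb{C}[x,y,z]/(x^3+y^3+z^3)$, $I=(x,y)$, $k=0$: here $(z^2)^3=(x^3+y^3)^2\in I^6$, so $z^2\in\overline{I^2}$, yet $z^2\notin I$. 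The hypothesis on $p$ excludes such rings (it forces $p$ to be a nonunit), and Heitmann's proof in \cite{H1} uses it essentially, through an induction on the number of generators that builds explicit towers of integral extensions in which the required divisibilities are created; nothing playing that role appears in your sketch. What is missing, concretely, is the idea that converts ``integral over $I^{n+k}$'' into ``lies in $I^{k+1}$ of an integral extension'' by exploiting the presence of $p$ in $\sqrt{(x_1,x_2)R}$.
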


Recall that the classical Brian\c con-Skoda theorem says that in a regular ring, if an ideal $I$ is generated by $n$ elements, then the integral closure of $I^{n+d}$ is contained in $I^{d+1}$ for every integer $d$ \cite{LS}. We can recover this result by proving that in a regular local ring, every ideal is closed under the extended plus closure; see Theorem \ref{RLR}. We give two proofs of this theorem: it can be obtained from our earlier work on the vanishing conjecture for maps of Tor \cite{HM}, and it also follows from our main result in this article and the following result in \cite{H2}:

\begin{thm} \label{4.4} \cite[Theorem 4.4]{H2}
Let $R$ be a regular local ring with $p$ in the maximal ideal.
If the full extended plus closure has the colon-capturing property for {integral domains which are} finite extensions of complete regular local rings, then $I^{epf}=I$ for every ideal $I$ of $R$.
\end{thm}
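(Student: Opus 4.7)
My plan is to reduce the closedness statement in stages -- first to the complete case, then to $\m$-primary ideals, then to full parameter ideals -- and finally apply the colon-capturing hypothesis in a suitable finite extension. For the first reduction: if $x \in I^{epf}$ in a regular local ring $R$ with $p\in\m$, fixing compatible algebraic closures of the fraction fields gives $R^+\hookrightarrow \widehat R^+$, so the witnesses $c^\epsilon x\in (I,p^N)R^+$ persist to $\widehat R^+$, yielding $x\in (I\widehat R)^{epf}$; granted the theorem for complete $R$, faithful flatness of $R\to\widehat R$ returns $x\in I$. For the second reduction: $I=\bigcap_n(I+\m^n)$ by the Krull intersection theorem, and intersections of closed ideals are easily seen to be closed directly from the definition of $epf$, so it suffices to handle $\m$-primary ideals. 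For the third reduction: since $R$ is Gorenstein, Matlis duality in the Artinian Gorenstein quotient $R/\mathfrak{q}$ gives $J=(\mathfrak{q}:(\mathfrak{q}:J))$ for every $\m$-primary $J$ containing a parameter ideal $\mathfrak{q}$, so by Proposition~\ref{elem}(6) (colons of closed ideals are closed) it suffices to show each full parameter ideal $\mathfrak{q}=(y_1^{n_1},\ldots,y_d^{n_d})$ is closed.

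For such a $\mathfrak{q}$, let $x\in\mathfrak{q}^{epf}$; choosing $N$ large enough that $p^N\in\mathfrak{q}$ reduces the defining condition to $c^\epsilon x\in\mathfrak{q}R^+$ for every $\epsilon>0$. For each $\epsilon$, the witnessing relation lives in a module-finite domain extension of $R$, which I replace by its completion at a maximal ideal over $\m$ to obtain a complete local domain $S$ finite over $R$; the colon-capturing hypothesis then applies in $S$. Since $y_1^{n_1},\ldots,y_d^{n_d}$ is a system of parameters in $S$, iterated colon-capturing bounds the failure of $R\to S$ to be Cohen--Macaulay by $epf$-closure in $S$. Combining this with a valuation on $R^+$ extending the $\m$-adic order valuation on $R$ (used to manage the $c^\epsilon$ factor as $\epsilon\to 0$) should force $x\in\mathfrak{q}$.

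The main obstacle is this last step: extracting $x\in\mathfrak{q}R$ from $c^\epsilon x\in\mathfrak{q}S$ in a finite extension $S$, without invoking a splitting of $R\to S$ (which is the direct summand conjecture, a consequence this theorem is being used to prove). The colon-capturing hypothesis is precisely what quantifies the failure of $S$ to be Cohen--Macaulay in the way needed to ensure that the cyclic socle of $R/\mathfrak{q}$ cannot be $\epsilon$-absorbed into $\mathfrak{q}R^+$. Secondary technical issues include uniformity of the witness $c$ and of the extension $S$ as $\epsilon$ varies, which can be handled by fixing $c$ at the outset and constructing a single colimit extension containing all witnesses.
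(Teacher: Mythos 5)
Your reductions are fine, and in fact they track what the paper itself says around this statement (which it does not prove, but quotes from \cite{H2} with a remark justifying the restriction to complete regular local rings and to domain extensions): persistence of the $epf$ condition along $R\to\widehat R$ plus faithful flatness handles the completion step, Krull intersection reduces to $\m$-primary ideals, and Gorenstein linkage $J=(\mathfrak q:(\mathfrak q:J))$ together with Proposition~\ref{elem}(6) reduces to parameter ideals $\mathfrak q=(y_1^{n_1},\dots,y_d^{n_d})$. The genuine gap is that everything after that---the actual content of the theorem---is missing. From $x\in\mathfrak q^{epf}$ you correctly extract $c^\epsilon x\in\mathfrak q S_\epsilon$ with $S_\epsilon$ a complete local domain finite over $R$, but the step ``colon-capturing plus a valuation extending the $\m$-adic order valuation should force $x\in\mathfrak q$'' is precisely the theorem, and you yourself label it the main obstacle without giving an argument. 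No direct valuation obstruction is available: $R^+$ contains elements of arbitrarily small positive order, so $c^\epsilon x\in\mathfrak q R^+$ produces no numerical contradiction by itself, and the assertion that the socle generator of $R/\mathfrak q$ cannot be ``$\epsilon$-absorbed'' into $\mathfrak q R^+$ is a form of the conclusion to be proved (it is what ultimately yields the direct summand conjecture), not a principle you may invoke. What the hypothesis actually supplies is that systems of parameters of each finite domain extension $S$ are, up to $epf$-closure, regular sequences; converting that into closedness of $\mathfrak q$ in $R$ is the inductive relation-chasing of \cite[Lemma 4.3 and Theorem 4.4]{H2} (rewriting $c^\epsilon x=\sum_i a_i y_i^{n_i}+p^Nb$ in $S$ and applying colon-capturing repeatedly, with the resulting multipliers compounding in a controlled way), and none of that appears in your sketch.

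A secondary but real problem is your proposed fix for uniformity: ``a single colimit extension containing all witnesses'' is not a finite extension of a complete regular local ring, so the colon-capturing hypothesis does not apply to it. Moreover, each application of colon-capturing does not land you back in an honest ideal of $S$ but in $(\cdots)S^{epf}$, i.e.\ it introduces a fresh multiplier and a fresh passage through $S^+$; iterating it therefore requires bookkeeping of how these multipliers accumulate across the induction, which your plan does not address.
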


\begin{remark}
The actual statement of this result in \cite{H2} presumes that the colon-capturing property holds for finite extensions of all regular local rings.
{However, it is clear from the proof that it suffices to consider extensions which are domains.
Also,} the statement of \cite[Lemma 4.3]{H2} makes it clear that one may restrict to complete regular local rings.
Then, by faithful flatness, the fact that $I^{epf}=I$ holds in complete regular local rings implies it holds in all regular local rings.
\end{remark}

One may observe that these results are all well known for tight closure.
In fact, an alternate definition of tight closure closely resembles that of extended plus closure.
Instead of requiring $c^{\epsilon}x\in IR^+$, the tight closure requires
$c^{\epsilon}x\in IR^F$  where $R^F$ is the integral closure of $R$ in the largest purely inseparable extension of its quotient field.
Superficially, this means the tight closure could be smaller.
However, Hochster and Huneke \cite{HH3} introduced the dagger closure (which must contain the extended plus closure) and showed \cite[Theorem 3.1]{HH3} that, for characteristic $p$ complete local rings, the tight closure and dagger closure were equal.
Thus

\begin{prop}\label{TC}
If $R$ is a complete local ring of equal characteristic $p$, the extended plus closure coincides with tight closure for ideals of $R$.
\end{prop}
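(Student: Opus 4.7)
The plan is to observe that when $R$ has equal characteristic $p$, the factor $p^N$ in the definition of $I^{epf}$ disappears, so that the extended plus closure collapses to the Hochster--Huneke dagger closure $I^\dagger$, and then quote the identification $I^\dagger = I^*$ in characteristic $p$.

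First I would reduce to the case of a complete local domain. Both the extended plus closure and the tight closure are computed by passing modulo each minimal prime and intersecting the liftings back (the former by the Remark following the definition of $I^{epf}$, the latter by the analogous well-known fact for tight closure). Thus it suffices to assume that $R$ is a complete local domain of characteristic $p$.

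In that case $p = 0$ in $R$, so $(I, p^N)R^+ = IR^+$ for every $N$, and the defining condition for $x \in I^{epf}$ becomes the existence of a nonzero $c \in R$ with $c^\epsilon x \in IR^+$ for every $\epsilon > 0$. This is precisely the defining condition of the dagger closure $I^\dagger$ of \cite{HH3}. In the other direction, if $x$ satisfies the tight closure condition $c x^{p^e} \in I^{[p^e]}$ for all $e \gg 0$, then taking $p^e$-th roots inside $R^+$ gives $c^{1/p^e} x \in IR^+$, and for any $\epsilon > 0$ and $e$ sufficiently large we may multiply by $c^{\epsilon - 1/p^e} \in R^+$ to conclude $c^\epsilon x \in IR^+$. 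Consequently, $I^* \subseteq I^{epf} \subseteq I^\dagger$, and the missing inclusion $I^\dagger \subseteq I^*$ is supplied by \cite[Theorem 3.1]{HH3}.

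The only real point requiring attention is the standard reconciliation between the ``witness $c \in R$'' formulation used here and the ``witness $c \in R^+$ with small valuation'' formulation of dagger closure that sometimes appears in the literature, but this is routine: any $c \in R^+$ satisfies an integral equation over $R$, and the product of its conjugates (a norm down to $R$) serves as a witness in $R$, the change being absorbed by a rescaling of $\epsilon$. Beyond this cosmetic point, no serious obstacle remains once the Hochster--Huneke theorem is invoked.
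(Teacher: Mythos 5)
Your argument is correct and takes essentially the same route as the paper: sandwich $I^* \subseteq I^{epf} \subseteq I^{\dagger}$ (after reducing to a complete local domain) and close the loop with \cite[Theorem 3.1]{HH3}, the only cosmetic difference being that the paper quotes the $R^F$-characterization of tight closure for the inclusion $I^*\subseteq I^{epf}$ while you verify it directly by extracting $p^e$-th roots in $R^+$. One small caution: the $epf$ condition is not literally ``precisely'' the dagger condition (dagger witnesses may vary with $\epsilon$ and live in $R^+$, and your norm remark does not produce a single $c$ working for all $\epsilon$), but since your chain only uses the inclusion $I^{epf}\subseteq I^{\dagger}$, which does hold, the proof is unaffected.
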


\subsection{Perfectoid algebras} We will freely use the language of perfectoid spaces \cite{S} and almost mathematics \cite{GR}. In this paper we will always work in the following situation: for a perfect field $k$ of characteristic $p>0$, we let $W(k)$ be the ring of Witt vectors with coefficients in $k$. Let $K^\circ$ be the $p$-adic completion of $W(k)[p^{1/p^\infty}]$ and $K=K^\circ[1/p]$. Then $K$ is a {\it perfectoid field} in the sense of \cite{S} with $K^\circ\subseteq K$ its ring of integers.

A {\it perfectoid $K$-algebra} is a Banach $K$-algebra $R$ such that the set of powerbounded elements $R^\circ\subseteq R$ is bounded and the Frobenius is surjective on $R^\circ/p$. A $K^\circ$-algebra $S$ is called {\it integral perfectoid} if it is $p$-adically complete, $p$-torsion free, satisfies $S=S_*$, and the Frobenius induces an isomorphism $S/p^{1/p}\to S/p$.  Here $S_*=\{x\in S[1/p] \hspace{0.3em} | \hspace{0.3em} p^{1/p^k}\cdot x\in S \text{ for all } k \}$, so $S$ is almost isomorphic to $S_*$ with respect to $(p^{1/p^\infty})$. In practice, we will often ignore this distinction since one can always pass to $S_*$ without affecting the issue. These two categories are equivalent to each other \cite[Theorem 5.2]{S} via the functors $R\to R^\circ$ and $S\to S[1/p]$.

Almost mathematics in this article will be measured with respect to a flat ideal $(c^{1/p^\infty})$. There are two cases that we will use: $c=p$ or $c=pg$ for some nonzero element $g$. This will be stated explicitly throughout.

\section{Main Results}

For $R$ a Noetherian integral domain of mixed characteristic $p$, we recall that $R^+$ denotes the absolute integral closure of $R$. We let $R^{++}$ denote the $p$-adic completion of $R^+$. For a nonzero element $c\in R$, we will use $R^c$ to denote the integral closure of $R^{++}$ in $R^{++}[c^{-1}]$.

\begin{lemma}\label{complete}
Let $R$ be an integral domain of mixed characteristic $p$ and $I$ an ideal of $R$. For $x\in R$, if there exists $c\in R$ such that for every $\epsilon>0$ and every positive integer $N$, $c^{\epsilon}x\in (I,p^N)R^{++}$, then $x\in I^{epf}$.
\end{lemma}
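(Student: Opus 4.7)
The plan is to show that the very same $c$ from the hypothesis witnesses $x \in I^{epf}$; the only task is to replace ``$\in (I,p^N)R^{++}$'' by ``$\in (I,p^N)R^+$'' for each $\epsilon$ and $N$. The key ingredient is the general (non-Noetherian) fact that for any ring $A$ with $p$-adic completion $\widehat{A}$, the canonical map $A/p^N A \to \widehat{A}/p^N \widehat{A}$ is an isomorphism for every $N\geq 1$. Applied with $A=R^+$ and $\widehat{A}=R^{++}$, this tells us that an element $y \in R^+$ whose image in $R^{++}$ lies in $p^N R^{++}$ must already lie in $p^N R^+$.

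Given that, I would fix $\epsilon>0$ and $N>0$ and, invoking the hypothesis, write $c^\epsilon x = \sum_j a_j r_j + p^N s$ with $a_j \in I$ and $r_j,s \in R^{++}$. Using the surjectivity of $R^+ \to R^{++}/p^N R^{++}$, for each $j$ I choose $r_j' \in R^+$ with $r_j - r_j' \in p^N R^{++}$. Collecting the resulting $p^N$-multiples into a single error term, one finds $c^\epsilon x - \sum_j a_j r_j' \in p^N R^{++}$. The left-hand side lies in $R^+$, so the key fact produces $s' \in R^+$ with $c^\epsilon x - \sum_j a_j r_j' = p^N s'$. Hence $c^\epsilon x = \sum_j a_j r_j' + p^N s' \in (I,p^N)R^+$; since $\epsilon$ and $N$ were arbitrary, $x \in I^{epf}$.

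The only place one needs to be careful is in invoking the isomorphism $R^+/p^N R^+ \cong R^{++}/p^N R^{++}$ for the non-Noetherian ring $R^+$, but this is purely formal and follows from $R^{++} = \varprojlim R^+/p^n R^+$. I do not anticipate a genuine obstacle here; the remainder is bookkeeping around approximating elements of $R^{++}$ by elements of $R^+$ modulo $p^N$.
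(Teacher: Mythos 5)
Your proof is correct and follows essentially the same route as the paper: the paper approximates the $R^{++}$-coefficients by terms of Cauchy sequences in $R^+$ and absorbs the error into $p^N R^+$, which is exactly your approximation argument packaged via the identification $R^+/p^N R^+ \cong R^{++}/p^N R^{++}$. That identification is indeed valid without Noetherian hypotheses since $(p)$ is finitely generated (in particular $p^N R^{++} \cap R^+ = p^N R^+$), so there is no gap.
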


\begin{proof}
We need only show that if $c^{\epsilon}x\in (I,p^N)R^{++}$, then $c^{\epsilon}x\in (I,p^N)R^+$.
However, if $c^{\epsilon}x\in (I,p^N)R^{++}$, we have $c^{\epsilon}x=\sum_{i=1}^sy_iu_i+p^Nu_0$ where each $y_i\in I$ and each $u_i$ is the limit of a Cauchy sequence $\{u_{ij}\}$.
It follows that $\{c^{\epsilon}x-\sum_{i=1}^sy_iu_{ij}-p^Nu_{0j}\}$ is a Cauchy sequence which converges to zero.
Thus, for sufficiently large $j$, we have $c^{\epsilon}x-\sum_{i=1}^sy_iu_{ij}-p^Nu_{0j}\in p^NR^+$.
Then $c^{\epsilon}x=\sum_{i=1}^sy_iu_{ij}+p^N(u_{0j}+v)\in (I,p^N)R^+$.
\end{proof}

\begin{lemma}\label{intersection}
Let $R$ be a universally catenarian Noetherian integral domain of mixed characteristic $p$ and suppose $(p,g)R$ is a height two ideal. Then we have $\cap_n(g, p^n)R^{++}=gR^{++}$.
\end{lemma}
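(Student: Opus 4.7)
My plan is to reduce the identity to the claim that multiplication by $g$ is injective on $R^+/p^nR^+$ (equivalently on $R^{++}/p^nR^{++}$) for every $n \geq 1$; granting this, the lemma follows from a standard $p$-adic Cauchy sequence argument. Given $x \in \bigcap_n (g,p^n)R^{++}$, I would inductively build a sequence $\{a_n\}$ in $R^{++}$ with $x \equiv g a_n \pmod{p^n R^{++}}$ and $a_{n+1} \equiv a_n \pmod{p^n R^{++}}$: the existence of each $a_n$ is immediate from $x \in (g,p^n)R^{++}$, and the compatibility is forced by $g(a_{n+1}-a_n)\in p^n R^{++}$ together with the injectivity hypothesis. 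This sequence is Cauchy in the $p$-adically complete ring $R^{++}$, so it converges to some $a \in R^{++}$, and $x - ga \in \bigcap_n p^n R^{++} = 0$ gives $x = ga \in gR^{++}$.

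To prove that $g$ is a nonzerodivisor on $R^+/p^nR^+$, suppose $g\xi = p^n\eta$ with $\xi, \eta \in R^+$. I would pick a module-finite domain $R' = R[\xi,\eta] \subseteq R^+$. As a finite extension of a Noetherian universally catenarian domain, $R'$ is itself Noetherian and universally catenarian, and the dimension formula forces $(p,g)R'$ to have height $2$ in $R'$. It then suffices to exhibit an integral extension $R' \subseteq R'' \subseteq R^+$ with $\xi/p^n \in R''$, since then $\xi \in p^n R^+$.

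The natural candidate is the normalization $R''$ of $R'$ inside its fraction field, which by the Mori--Nagata theorem is a Krull domain and which is contained in $R^+$ because every element of $R''$ is integral over $R$. Since $R''$ is the intersection of its localizations $R''_{\mathfrak{P}}$ at its height one primes $\mathfrak{P}$, it suffices to show $\xi/p^n = \eta/g \in R''_{\mathfrak{P}}$ for each such $\mathfrak{P}$. If $p \notin \mathfrak{P}$, then $p$ is a unit in $R''_{\mathfrak{P}}$ and there is nothing to check. If $p \in \mathfrak{P}$, I must argue $g \notin \mathfrak{P}$, whence $\eta/g \in R''_{\mathfrak{P}}$. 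If instead $g \in \mathfrak{P}$, the contraction $\mathfrak{Q} := \mathfrak{P} \cap R'$ contains $(p,g)R'$ and therefore has height at least $2$ in $R'$.

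The main obstacle is transporting this height bound from $\mathfrak{Q}$ back to $\mathfrak{P}$, since $R''$ need not be module-finite or even Noetherian over $R'$. My plan is to approximate $R''$ by its module-finite (hence Noetherian and universally catenarian) subextensions $R' \subseteq R'_1 \subseteq R''$; for each such $R'_1$, the dimension formula yields $\height(\mathfrak{P} \cap R'_1) = \height(\mathfrak{Q}) \geq 2$ in $R'_1$. Given any strict descending chain of primes of $R''$ terminating at $\mathfrak{P}$, by adjoining to $R'_1$ witnesses of the strict inclusions we can realize the chain inside some such $R'_1$, forcing $\height(\mathfrak{P}) \geq 2$ in $R''$ and contradicting $\height(\mathfrak{P}) = 1$.
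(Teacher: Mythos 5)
Your reduction to the statement that $g$ is a nonzerodivisor on $R^{+}/p^nR^{+}$, and your strategy of passing to a normal integral extension where the (weakly) associated primes of $(p^n)$ have height one, follow the same skeleton as the paper's argument. The paper, however, disposes of the crucial point by invoking the fact that universal catenarity forces $(p,g)$ to have height two in \emph{every} integral extension domain of $R$ (a consequence of chain-condition theory: universally catenary local domains are quasi-unmixed and satisfy the dimension/altitude formula for integral extensions, by results of Nagata and Ratliff), and it is exactly this point where your argument has a genuine gap.

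The flaw is in your last step. Contracting a strict chain of primes of $R''$ below $\mathfrak{P}$ into a module-finite approximation $R'_1$ (using incomparability for integral extensions) only yields the inequality $\height_{R'_1}(\mathfrak{P}\cap R'_1)\geq \height_{R''}(\mathfrak{P})$; since $\height_{R''}(\mathfrak{P})=1$, this gives no contradiction with $\height_{R'_1}(\mathfrak{P}\cap R'_1)\geq 2$. What you actually need is the reverse direction: to lift a chain of length two lying below $\mathfrak{Q}=\mathfrak{P}\cap R'$ (or below $\mathfrak{P}\cap R'_1$) up to a chain below $\mathfrak{P}$ in $R''$. That is a going-down statement for the integral extension $R'_1\subseteq R''$, which is not available because $R'_1$ need not be normal and $R''$ is not flat or module-finite over it; it also cannot be extracted from the filtered-union description, since heights can genuinely drop in such colimits (for a non--universally-catenary Noetherian local domain the normalization can indeed have a height-one prime lying over a height-two prime, which is why the hypothesis of universal catenarity is essential and why the needed transfer is a real theorem rather than a formal limit argument). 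To complete your proof you must either cite the Ratliff--Nagata result that for a universally catenary Noetherian domain the contraction of a height-one prime of any integral extension domain (in particular of the normalization $R''$) has height one --- equivalently, that $(p,g)$ has height two in $R''$, which is precisely what the paper asserts --- or supply an independent proof of it; the chain-contraction argument as written proves the wrong inequality.
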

\begin{proof}
Pick $z\in \cap_n(g, p^n)R^{++}$;  we can write $z=a_1g+b_1p$ with $a_1\in R^+$ and $b_1\in R^{++}$.
Now $b_1p\in \cap_n(g, p^n)R^{++}$ and so we can write $b_1p=a_2g+b_2p^2$ with $a_2\in R^+$ and $b_2\in R^{++}$.
But then $b_2p^2\in \cap_n(g, p^n)R^{++}$, so we can repeat the above process to write $b_2p^2=a_3g+b_3p^3$ and keep going: $$z=a_1g+b_1p=a_1g+a_2g+b_2p^2=a_1g+a_2g+a_3g+b_3p^3=\cdots.$$ Note that we have $a_ig\in p^{i-1}R^{++}$ for all $i$.  Thus $a_ig=0$ as an element in $R^+/p^{i-1}=R^{++}/p^{i-1}$. Since $R$ is universally catenary, $(p,g)$ is a height two ideal in any integral extension of $R$. Since $R^+$ is a direct limit of normal integral extensions of $R$, $g$ is a nonzerodivisor on $R^+/p^{i-1}$, and so $a_i\in p^{i-1}R^+$. Therefore, in $R^{++}$ we have $$z=a_1g+a_2g+a_3g+\cdots=g(a_1+a_2+a_3+\cdots)\in gR^{++}.$$
This finishes the proof.
\end{proof}

\begin{lemma}\label{new complete}
Let $R$ be a universally catenarian Noetherian integral domain of mixed characteristic $p$ of dimension at least two and $I$ an ideal of $R$. For $x\in R$, if there exists a nonzero $c\in R$ such that for every $\epsilon>0$ and every positive integer $N$, $c^{\epsilon}x\in (I,p^N)R^c$, then $x\in I^{epf}$.
In fact, the injection $R^{++}\to R^c$ is an almost isomorphism with respect to $(c^{1/p^{\infty}})$.
\end{lemma}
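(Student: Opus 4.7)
The plan is to first establish the ``almost isomorphism'' statement (the second sentence of the lemma) and then deduce the primary claim as a simple consequence. For the deduction, suppose $c^\epsilon x \in (I,p^N)R^c$ for all $\epsilon,N>0$; applying the hypothesis with $\epsilon/2$ in place of $\epsilon$ and then multiplying by $c^{\epsilon/2}$, which sends $R^c$ into $R^{++}$ by the almost isomorphism, yields $c^\epsilon x \in (I,p^N)R^{++}$ for every $\epsilon,N>0$, and Lemma~\ref{complete} concludes $x \in I^{epf}$.

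For the almost isomorphism itself, I would start with $y \in R^c$, write $y = z/c^M$ with $z \in R^{++}$, and use a monic integrality equation $y^m + a_1 y^{m-1} + \cdots + a_m = 0$ for $y$ over $R^{++}$. Multiplying through by $c^{Mm}$ yields
\[
z^m + (a_1 c^M)z^{m-1} + \cdots + (a_m c^{Mm}) = 0,
\]
so $z$ lies in the ideal-theoretic integral closure of $(c^M)R^{++}$. A standard inductive argument then shows $z^k \in c^{M(k-m+1)} R^{++}$ for every $k \geq m$; in particular, whenever $p^j \geq m$ we have $z^{p^j} = c^L u_j$ for some $u_j \in R^{++}$, where $L := M(p^j - m + 1)$.

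The main obstacle is to promote this identity to $c^{M(m-1)/p^j} z \in c^M R^{++}$, equivalently $c^{M(m-1)/p^j} y \in R^{++}$. To do so, I would approximate $z$ and $u_j$ modulo $p^N$ by elements $z', u_j' \in R^+$; since $R^+$ is absolutely integrally closed, $u_j' = v^{p^j}$ for some $v \in R^+$, and then $(z')^{p^j} \equiv (c^{L/p^j} v)^{p^j} \pmod{p^N R^+}$. Working inside a finite subextension of $R^+$ containing $c^{1/p^j}$, where the ideal $I = (c^{L/p^j}, p^{N/p^j})$ has $p$ in its radical, an appropriate application of Proposition~\ref{BS} to a sufficiently high power of $z'$ should place $z'$ itself into $I \cdot S$ for some integral extension $S$. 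Lifting the approximation and multiplying by $c^{M(m-1)/p^j}$ gives $c^{M(m-1)/p^j} z \in c^M R^{++} + p^{N/p^j} R^{++}$ for every $N$, and Lemma~\ref{intersection} (applicable because the assumption $\dim R \geq 2$ allows us to take $(p,c)$ to be a height-two ideal, possibly after a harmless adjustment of $c$) yields $\bigcap_N (c^M, p^N) R^{++} = c^M R^{++}$. Therefore $c^{M(m-1)/p^j} z \in c^M R^{++}$, and since $M(m-1)/p^j$ can be made arbitrarily small by choosing $j$ large, the almost isomorphism follows.
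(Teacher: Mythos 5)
Your overall architecture is the same as the paper's (prove that $R^{++}\to R^c$ is a $(c^{1/p^\infty})$-almost isomorphism, then conclude via Lemma~\ref{complete}; approximate modulo $p^N$ in $R^+$, extract $p$-power roots using that $R^+$ is absolutely integrally closed, and invoke Proposition~\ref{BS}), and your deduction of the membership statement from the almost isomorphism is fine. But the decisive step is gapped. From $(z')^{p^j}\equiv (c^{L/p^j}v)^{p^j} \pmod{p^NR^+}$ you get $z'\in\overline{(c^{L/p^j},p^{N/p^j})R^+}$, i.e.\ membership in the integral closure of the \emph{first} power of a two-generated ideal. Proposition~\ref{BS} with $n=2$ generators needs $z\in\overline{I^{2+k}}$, $k\ge 0$, to yield $z\in I^{k+1}S$, so it gives nothing from $\overline{I}$; and applying it to a high power $(z')^r\in\overline{I^r}$ only yields a conclusion about $(z')^r$, from which you cannot recover $z'\in IS$ (you are back to extracting roots, i.e.\ going in circles). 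The correct maneuver --- the one the paper uses --- is to shrink the generators rather than raise the element: rewrite the membership as $z'\in\overline{(c^{L/p^{j+s}},p^{N/p^{j+s}})^{p^s}R^+}$ and apply Proposition~\ref{BS} to a high power of an ideal with tiny generators, so that the unavoidable loss of one factor costs only $c^{L/p^{j+s}}$, which is absorbed into the ``almost'' as $s\to\infty$ (in the paper this is exactly the step $v_1\in\overline{(g^\epsilon,p^n)^NR^+}\Rightarrow v_1\in(g^\epsilon,p^n)^{N-1}R^+\subseteq (g^{d-\epsilon},p^n)R^+$ with $N=d/\epsilon$). As written, ``apply Proposition~\ref{BS} to a sufficiently high power of $z'$'' does not go through.

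The second gap is your appeal to Lemma~\ref{intersection} for the ideal $(c^M,p^N)$. That lemma requires $(p,g)$ to have height two, and the claimed ``harmless adjustment'' of $c$ does not exist: if $p$ divides $c$, or more generally some height-one prime contains both $p$ and $c$ (the hypothesis certainly allows $c=p$), then no replacement of $c$ that preserves the hypothesis $c^{\epsilon}x\in (I,p^N)R^c$ can make $(p,c)$ height two --- one can only enlarge $R^c$, e.g.\ pass to $pg\in\sqrt{cR}$ as the paper does, and then $(p,pg)=(p)$ still has height one. The paper circumvents this by first removing the $p$-part of the denominator \emph{exactly}: writing $u=(pg)^{-d}v$ and $v=w_1+p^dw_2$, it shows $w_1/p^d$ is integral over $R^+$ (using $R^+\cap p^kR^{++}=p^kR^+$), hence lies in $R^+$, so that only a $g$-denominator remains; Lemma~\ref{intersection} is then applied with $g^{d-\epsilon}$, where $(p,g)$ genuinely has height two --- this is also where universal catenarity and $\dim R\ge 2$ enter, along with the check that $pg$ is a nonzerodivisor on $R^{++}$, which your write-up omits although injectivity of $R^{++}\to R^c$ is part of the statement. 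To repair your version you must either reproduce this reduction, or prove the stronger statement $\cap_N(c^M,p^N)R^{++}=c^MR^{++}$ for arbitrary nonzero $c$ (possible, by bounding $v_q(c)/v_q(p)$ over the finitely many height-one primes $q$ of $R$ containing $p$), but as it stands this step is unjustified.
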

\begin{proof}
We can find an element $g\in R$ such that $pg\in \sqrt{cR}$ and $(p,g)R$ is a height two ideal; likewise so is $(p,g)R'$ for every integral extension $R'$ of $R$ (since $R$ is universally catenary).
The hypothesis of the lemma clearly holds with $pg$ in place of $c$ (note that $R^{pg}$ is larger than $R^c$) and so we may assume $c=pg$. We first show that $pg$ is a nonzerodivisor on $R^{++}$, it is enough to prove that $g$ is a nonzerodivisor. But if $gx=0$ in $R^{++}$ and $x\neq 0$, then there exists $a\in R^+$ such that $ x-a\in p^nR^+$ but $a\notin p^nR^+$, and $ga\in p^nR^{++}$. It follows that $ga\in p^nR^+$, contradicting the fact that $p,g$ is regular in $R^+$.
Now this lemma will follow immediately from Lemma~\ref{complete} if we can show that the injection $R^{++}\to R^c$ is an almost isomorphism with respect to $(c^{1/p^{\infty}})$.

Suppose $u\in R^c$.
Then $u=(pg)^{-d}v$ for $v\in R^{++}$ and some integer $d$.
As $R^{++}$ is just the $p$-adic completion of $R^+$, we may write $v=w_1+p^dw_2$
with $w_1\in R^+$ and $w_2\in R^{++}$.
It follows that $\frac {w_1}{p^d}=g^du-w_2$ is integral over $R^{++}$.
{This gives us an equation $(\frac {w_1}{p^d})^s+b_1(\frac {w_1}{p^d})^{s-1}+\cdots +b_s=0$ with each $b_i\in R^{++}$.
As each $b_i\in R^++p^{(d-1)s}R^{++}$, we can reduce to the case where $b_i\in R^+$ for all $i<s$.
Then $p^{ds}b_s\in R^+\cap p^{ds}R^{++}=p^{ds}R^+$ and so $b_s\in R^+$ as well.
Thus  $\frac {w_1}{p^d}$ is} integral over the integrally closed $R^+$.
So without loss of generality, we may assume $w_1=0$ and so reduce to the case
$u=g^{-d}v$.

Fix $t>0$ and let $\epsilon=1/p^t$.
To complete the proof it suffices, by Lemma~\ref{intersection}, to show $v\in (g^{d-\epsilon}, p^n)R^{++}$ for all positive integers $n$ . Now we fix an $n>0$.
We may write $v=v_1+p^{nN}v_2$ with $v_1\in R^+$, $v_2\in R^{++}$, and
$N=dp^{t}=d/\epsilon$.
As $v$ is integral over $g^dR^{++}$, $v_1$ is integral over $(g^d,p^{nN})R^{++}$. This means there exists some integer $k$ such that $(v_1, g^d, p^{nN})^{k+1}R^{++}=(v_1,g^d,p^{nN})^k(g^d,p^{nN})R^{++}$. Since both ideals contain a power of $p$, it is easy to see that $(v_1, g^d, p^{nN})^{k+1}R^{+}=(v_1,g^d,p^{nN})^k(g^d,p^{nN})R^{+}$ and thus $v_1$ is integral over $(g^d,p^{nN})R^+$.
It follows that $v_1\in \overline{(g^{\epsilon},p^n)^NR^+}$.
As $R^+$ is absolutely integrally closed, Proposition~\ref{BS} gives
$v_1\in {(g^{\epsilon},p^n)^{N-1}R^+}\subseteq (g^{d-\epsilon},p^n)R^+$. Therefore $v=v_1+p^{nN}v_2\in (g^{d-\epsilon},p^n)R^{++}$ as desired.
\end{proof}

Now suppose $R$ is a complete and unramified regular local ring of mixed characteristic {that has an F-finite residue field}, and $R\to S$ is a module-finite domain extension (so $S$ is a complete local domain). By Cohen's structure theorem $R\cong W(k)[[x_1,\dots,x_{d-1}]]$ where $W(k)$ is a coefficient ring of $R$, which is a complete and unramified DVR with an F-finite residue field $k$. Let $k^{1/p^\infty}$ denote the perfect closure of $k$, let $A_0=R\otimes_{W(k)} W(k^{1/p^\infty})$, and let $A$ be the $p$-adic completion of $A_0$. Then $A$ is $p$-adically complete and formally smooth over $W(k^{1/p^\infty})$ mod $p$. We next point out that $A$ is actually Noetherian (though we don't need this).

\begin{lemma}
With the notation as above, $A$ is a Noetherian ring.
\end{lemma}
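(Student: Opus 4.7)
The plan is to reduce the Noetherianity of $A$ to the Noetherianity of $A/p$, using the standard result that a ring $B$ complete and separated with respect to a finitely generated ideal $I$ is Noetherian whenever $B/I$ is Noetherian (see, e.g., the Stacks Project). Since $A$ is $p$-adically complete by construction and $(p)$ is principal, this reduces the lemma to showing $A/p$ is Noetherian.

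First I identify $A/p$ explicitly. Since $W(k^{1/p^\infty})$ is $p$-torsion free and flat over $W(k)$, the base change $A_0$ is $p$-torsion free; hence $A$ is $p$-torsion free and $A/p = A_0/p = k[[x_1,\dots,x_{d-1}]] \otimes_k k^{1/p^\infty}$. Because $k$ is F-finite, each extension $k^{1/p^n}/k$ is finite, and thus $A/p = \varinjlim_n T_n$ where $T_n := k^{1/p^n}[[x_1,\dots,x_{d-1}]]$ is a complete regular local ring of dimension $d-1$.

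The main step is to show that the natural inclusion $A/p \hookrightarrow S := k^{1/p^\infty}[[x_1,\dots,x_{d-1}]]$ is faithfully flat; since $S$ is Noetherian and faithful flatness descends Noetherianity to the base, this finishes the proof. For each $n$, the map $T_n \to S$ is a local homomorphism of complete regular local rings of the same dimension $d-1$, and the Koszul complex on $x_1,\dots,x_{d-1}$ shows $\Tor_i^{T_n}(S, T_n/\m_n) = 0$ for $i>0$ because these elements form a regular sequence in $S$; combined with $\m_n$-adic completeness of $S$, the topological local criterion of flatness yields flatness of $T_n \to S$. Any finitely generated ideal of $A/p$ has its generators in some $T_{n_0}$, so flatness of $A/p \to S$ reduces to flatness over $T_{n_0}$. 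The map is local with common residue field $k^{1/p^\infty}$ on both sides, so it is faithfully flat.

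The main obstacle is establishing flatness of $T_n \to S$ even though $S$ is not finitely generated over $T_n$: this forces one to invoke the topological local criterion of flatness rather than the usual finitely-generated version, which in turn requires the $\m_n$-adic completeness of $S$. Once this flatness is in place, the rest of the argument is formal.
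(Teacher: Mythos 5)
Your proof is correct, but it takes a genuinely different route from the paper after the common first step. Both arguments reduce to showing $A/p\cong k^{1/p^\infty}\otimes_k k[[x_1,\dots,x_{d-1}]]$ is Noetherian (the paper proves the reduction ``$T$ $p$-adically complete and $T/pT$ Noetherian $\Rightarrow T$ Noetherian'' by hand, via Cohen's prime-ideal criterion and successive approximation, where you cite the standard completeness result; either is fine). From there the paper argues by induction on $d$: every prime $Q$ of $A/p$ is shown to be finitely generated by contracting to $P\subseteq k[[x_1,\dots,x_{d-1}]]$ and, when $\Ht P>0$, using Cohen's structure theorem to realize $k^{1/p^\infty}\otimes(k[[x_1,\dots,x_{d-1}]]/P)$ as a finite extension of $k^{1/p^\infty}\otimes k[[y_1,\dots,y_{d'-1}]]$ with fewer variables. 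You instead write $A/p=\varinjlim_n T_n$ with $T_n=k^{1/p^n}[[x_1,\dots,x_{d-1}]]$ (this is where F-finiteness enters your argument), prove $T_n\to S=k^{1/p^\infty}[[x_1,\dots,x_{d-1}]]$ is flat by the local criterion, pass to the colimit, and descend Noetherianity along the faithfully flat local map $A/p\to S$. Two small points you leave implicit but which do hold: the reduction of flatness of $A/p\to S$ to flatness over some $T_{n_0}$ uses that the transition maps $T_{n_0}\to T_n$ are finite free (so $J=J_0\otimes_{T_{n_0}}(A/p)$ for a finitely generated ideal $J$ with generators in $T_{n_0}$); and for the local criterion the cleanest justification is simply that $S$ is itself a Noetherian local ring and $T_n\to S$ is local, so the Tor-vanishing against the residue field suffices without appealing to completeness. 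Comparing the two: the paper's induction is more elementary and self-contained (Cohen's criterion plus the structure theorem) and in fact never uses F-finiteness of $k$ in this lemma, whereas your argument is shorter given standard flatness technology, uses F-finiteness (harmless here, since it is assumed throughout), and yields the additional information that $A/p$ sits faithfully flatly inside its $(x_1,\dots,x_{d-1})$-adic completion $k^{1/p^\infty}[[x_1,\dots,x_{d-1}]]$.
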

\begin{proof}
First of all, in general, if $T$ is $p$-adically complete, then $T$ is Noetherian if and only if $T/pT$ is Noetherian.
{To see this, suppose $Q$ is an infinitely generated prime ideal of $T$.
Then, since $T/pT$ is Noetherian, $Q$ does not contain $p$.} Now $Q+pT$ is finitely generated, say by $f_1+pt_1,\dots,f_n+pt_n$.
Suppose $f\in Q\subseteq Q+pT$; we can write $f=\sum a_{1j}(f_j+pt_j)=\sum a_{1j}f_j+p\sum a_{1j}t_j$. Now $p\sum a_{1j}t_j\in Q$ but $p\notin Q$, so $\sum a_{1j}t_j\in Q\subseteq Q+pT$ and thus $\sum a_{1j}t_j=\sum a_{2j}(f_j+pt_j)=\sum a_{2j}f_j+p\sum a_{2j}t_j$. This gives $f=\sum (a_{1j}+pa_{2j})f_j+p^2\sum a_{2j}t_j$. We can repeat this process to find $a_{ij}$ and since $T$ is $p$-adically complete, we have $f=\sum (\sum_ip^{i-1}a_{ij}) f_j$. Therefore $Q=(f_1,\dots,f_n)$ is finitely generated.

Now to see $A$ is Noetherian, it is enough to show $A/pA=k^{1/p^\infty}\otimes k[[x_1,\dots,x_{d-1}]]$ is Noetherian by the previous paragraph. We prove this by induction on $d$. The case $d=1$ is clear. We now prove that every prime ideal $Q$ in $k^{1/p^\infty}\otimes k[[x_1,\dots,x_{d-1}]]$ is finitely generated. Since $k[[x_1,\dots,x_{d-1}]]\to k^{1/p^\infty}\otimes k[[x_1,\dots,x_{d-1}]]$ is integral, we have $P=Q\cap k[[x_1,\dots,x_{d-1}]]$ with $\Ht P=\Ht Q$. If $\Ht Q=0$ then $Q=0$ since $k^{1/p^\infty}\otimes k[[x_1,\dots,x_{d-1}]]$ is a domain. If $\Ht Q=\Ht P>0$, then $\bar{Q}$ is a minimal prime of $k^{1/p^\infty}\otimes (k[[x_1,\dots,x_{d-1}]]/P)$. But we have $k[[y_1,\dots,y_{d'-1}]]\to k[[x_1,\dots,x_{d-1}]]/P$ for some $d'<d$ by Cohen's structure theorem; hence $k^{1/p^\infty}\otimes (k[[x_1,\dots,x_{d-1}]]/P)$, being a finite extension of $k^{1/p^\infty}\otimes k[[y_1,\dots,y_{d'-1}]]$, is Noetherian by our induction hypothesis. Therefore $\bar{Q}$ is finitely generated and thus $Q$ is finitely generated.
\end{proof}

Returning to the development of our set-up, we next set $B=S\otimes_R A$. Since $R\to S$ is module-finite and generically \'{e}tale, there exists a nonzero $g\in R$ with $\Ht (p,g)R=2$ such that $R[(pg)^{-1}]\to S[(pg)^{-1}]$ is finite \'{e}tale. Thus $A[(pg)^{-1}]\to B[(pg)^{-1}]$ is finite \'{e}tale. Let $K^\circ$ be the $p$-adic completion of $W(k^{1/p^\infty})[p^{1/p^\infty}]$ and $K=K^\circ[1/p]$. Let $A_{\infty,0}$ be the $p$-adic completion of $A[p^{1/p^\infty},x_1^{1/p^\infty},\dots,x_{d-1}^{1/p^\infty}]$, which is an integral perfectoid $K^\circ$-algebra.\footnote{ This is the only place we need to use the F-finite hypothesis on $k$. Note that we have $A_{\infty,0}/p\cong (k^{1/p^\infty}\otimes_kk[[\bar{x}_1,\dots,\bar{x}_{d-1}]])[\bar{p}^{1/p^\infty}, \bar{x}_1^{1/p^\infty},\dots,\bar{x}_{d-1}^{1/p^\infty}]$. One can check that the Frobenius map is surjective on this ring provided that $k$ is F-finite.}
Furthermore, for $g\in R\subseteq A$ as above, we let $A_{\infty,0}\to A_\infty$ be Andr\'{e}'s construction of integral perfectoid $K^\circ$-algebras (for example see \cite[Theorem 1.4]{B}): it is the integral perfectoid $K^\circ$-algebra of functions on the Zariski closed subset of $Spa(\Aa\langle T^{1/p^\infty}\rangle[1/p], \Aa\langle T^{1/p^\infty}\rangle)$ defined by the ideal $T-g$. More explicitly, $\Ab$ can be described as the $p$-adic completion of the integral closure of $\Aa\langle T^{1/p^\infty}\rangle/(T-g)$ inside $(\Aa\langle T^{1/p^\infty}\rangle/(T-g))[1/p]$. Andr\'{e} proved that $A_\infty$ is almost faithfully flat over $A_{\infty,0}$ mod $p^m$ for every $m$ (for example, see \cite[Theorem 2.3]{B}). More importantly, Andr\'{e} proved the following remarkable result:

\begin{thm}\label{andre}\cite [Theorem 0.3.1]{A1}
Let $\mathcal A$ be a perfectoid algebra over a perfectoid field $\mathcal K$ of residue characteristic $p$.
Suppose $g\in\mathcal A^\circ$ is a nonzerodivisor and $\mathcal A$ contains a compatible system of $p$-power roots of $g$. Let $\Bm'$ be a finite \etale $\Am[\frac 1g]$-algebra. Then \begin{enumerate}
  \item There exists a larger perfectoid algebra $\Bm$ between $\Am$ and $\Bm'$ such that the inclusion $\Am\to\Bm$ is continuous.
  We have $\Bm[\frac 1g]=\Bm'$ and $\Bm^\circ$ is contained in the integral closure of $g^{-1/p^{\infty}}\Am^\circ$ in $\Bm'$.
 {Moreover, the inclusion of $\Bm^\circ$ in the integral closure of $g^{-1/p^{\infty}}\Am^\circ$ in $\Bm'$ is a $(pg)^{1/p^\infty}$-almost isomorphism.}
  \item Let $\Bm$ be as in part (1).
  For every m, $\Bm^\circ/p^m$ is $(pg)^{1/p^\infty}$-almost finite \etale over $\Am^\circ/p^m$.
\end{enumerate}
\end{thm}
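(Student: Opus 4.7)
The plan is to pass to characteristic $p$ via Scholze's tilting equivalence, prove the almost-purity statement in that setting using a trace/discriminant argument that exploits perfectness, and then untilt. First I would form the tilt $\Am^\flat$, a perfect $\Km^\flat$-algebra. The compatible system of $p$-power roots of $g$ produces an element $g^\flat\in\Am^{\flat\circ}$ whose sharp agrees with $g$ modulo $p$. By the tilting equivalence for finite \'etale algebras, the finite \'etale $\Am[\frac 1g]$-algebra $\Bm'$ corresponds to a finite \'etale $\Am^\flat[\frac{1}{g^\flat}]$-algebra $(\Bm')^\flat$, so it suffices to construct the desired integral perfectoid object in characteristic $p$ and then untilt.

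Next I would prove the characteristic $p$ statement: for the perfect $\mathbb{F}_p$-algebra $S=\Am^{\flat\circ}$ with nonzerodivisor $g^\flat$ admitting compatible $p$-power roots, and a finite \'etale $S[\frac{1}{g^\flat}]$-algebra $T$, the integral closure $T^+$ of $S$ in $T$ is $(g^\flat)^{1/p^\infty}$-almost finite \'etale over $S$. The trace pairing on $T$ is perfect over $S[\frac{1}{g^\flat}]$, with discriminant a unit times some power $(g^\flat)^N$. Using perfectness one writes $(g^\flat)^N = ((g^\flat)^{N/p^n})^{p^n}$ and uses the fact that Frobenius is an isomorphism on $S$ (and on $T$) to show that $(g^\flat)^{1/p^n}\cdot\Hom_S(T^+,S)$ lands inside $T^+$ via the trace form. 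Letting $n\to\infty$ yields $(g^\flat)^{1/p^\infty}$-almost finite \'etaleness, and identifies $T^+$ with the integral closure of $(g^\flat)^{-1/p^\infty}S$ in $T$ up to $(g^\flat)^{1/p^\infty}$-almost isomorphism.

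Finally I would untilt. The $p$-adic completion of $T^+$ is, by Scholze's tilting equivalence for integral perfectoid algebras, the tilt of an integral perfectoid $\Am^\circ$-algebra $\Bm^\circ$; set $\Bm=\Bm^\circ[\frac 1p]$. Tracking the construction shows $\Bm[\frac 1g]=\Bm'$ and that $\Am\to\Bm$ is continuous. The almost finite \'etaleness in part (2), with respect to $(pg)^{1/p^\infty}$, transfers from the characteristic $p$ statement, which gives $(g^\flat)^{1/p^\infty}$-almost finite \'etaleness modulo $(g^\flat)^{m'}$; the additional $p^{1/p^\infty}$ factor in the almost structure accounts for the mismatch between the $g^\flat$-adic topology in the tilt and the $p$-adic topology in characteristic $0$. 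The almost isomorphism claimed in part (1) between $\Bm^\circ$ and the integral closure of $g^{-1/p^\infty}\Am^\circ$ in $\Bm'$ follows from comparing the two integral closures before and after tilting.

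The main obstacle is the characteristic $p$ almost purity step. Since $T^+$ is not \textit{a priori} finitely presented over the non-Noetherian base $S$, the trace form does not directly yield a dual of $T^+$; one must approximate $T^+$ by finitely generated $S$-submodules containing $(g^\flat)^N T^+$ for suitable $N$, analyse their duals, and pass to the limit using perfectness of $S$. A secondary difficulty in the untilting step is showing that the $p$-adic completion of $T^+$ is genuinely perfectoid, i.e.\ that Frobenius is surjective modulo $p$, and that its generic fibre is identified with $\Bm'$ itself rather than some larger \'etale cover.
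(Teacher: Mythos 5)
This theorem is not proved in the paper at all: it is quoted verbatim from Andr\'e's \emph{Le lemme d'Abhyankar perfecto\"ide} \cite[Theorem 0.3.1]{A1}, so the relevant comparison is with Andr\'e's proof (see also Bhatt's account in \cite{B}), and your proposal does not reproduce it. More importantly, your proposal contains a genuine gap at its very first step. You assert that ``by the tilting equivalence for finite \etale algebras'' the finite \etale $\Am[\frac 1g]$-algebra $\Bm'$ corresponds to a finite \etale $\Am^\flat[\frac{1}{g^\flat}]$-algebra. Scholze's tilting equivalence (almost purity) identifies finite \etale covers of a \emph{perfectoid} algebra with finite \etale covers of its tilt; it says nothing about covers of $\Am[\frac 1g]$, which is not perfectoid (it is not even complete, and the ramification here is along $g$, not along $p$). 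Producing a perfectoid object out of a cover that is only \etale after inverting $g$ is precisely the content of Andr\'e's theorem; if it followed formally from tilting, the perfectoid Abhyankar lemma would be an immediate corollary of Scholze's almost purity, which it is not. For the same reason, what you call a ``secondary difficulty'' in the untilting step --- identifying the generic fibre of the untilt with $\Bm'$ itself --- is in fact the central difficulty, and it cannot be resolved because the correspondence between covers of $\Am[\frac 1g]$ and covers of $\Am^\flat[\frac{1}{g^\flat}]$ was never established.

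Andr\'e's actual argument is of a different nature: one adjoins a variable with $p$-power roots, works on $\Spa$ of the perfectoid Tate algebra $\Am\langle T^{1/p^\infty}\rangle$ (this is exactly the construction the paper recalls when defining $A_\infty$ as functions on the Zariski closed subset cut out by $T-g$), applies Scholze's almost purity on the rational subsets $\{|T|\ge |p|^{c}\}$ where the cover \emph{is} \etale after inverting only $p$, and then uses a perfectoid Riemann extension (Hebbarkeitssatz) theorem together with a limiting argument over shrinking tubular neighborhoods of $\{T=g\}$ to descend to the closed subset; the loss incurred in this limit is exactly why the conclusion is only a $(pg)^{1/p^\infty}$-almost statement. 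Your characteristic~$p$ trace/discriminant step is plausible in isolation (it is essentially the classical almost-purity argument for perfect rings, modulo the care you note about non-finitely-presented integral closures), but without a legitimate bridge between the characteristic~$0$ cover ramified along $g$ and a characteristic~$p$ cover, it cannot be assembled into a proof of the theorem.
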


\begin{remark}
\label{Bo}
It is easy to see that $\Bm^\circ$ is $(pg)^{1/p^\infty}$-almost isomorphic to the integral closure of $\Am^\circ$ in $\Bm'$: clearly this integral closure is $(pg)^{1/p^\infty}$-almost contained in $\Bm^\circ$. Now suppose we have an equation $$y^n+a_1y^{n-1}+\cdots+a_n=0$$ where $n=p^e$ and $a_1,\dots,a_n$ belongs to $g^{-1/p^{\infty}}\Am^\circ$. Then after multiplication by $g^{1/p^t}$, the above equation tells us that $g^{1/p^{t+e}}y$ is integral over $\Am^\circ$ and thus $g^{1/p^\infty}\Bm^\circ$ is $(pg)^{1/p^\infty}$-almost contained in the integral closure of $\Am^\circ$ in $\Bm'$.
\end{remark}

\begin{notation}\label{Notation} We will frequently use the following notation throughout the rest of the article. $R$ will be a complete and unramified regular local ring {that has an F-finite residue field} and $S$ a module-finite domain extension of $R$. Fix a nonzero element $g\in R$ with $\Ht (p,g)R=2$ such that $R[(pg)^{-1}]\to S[(pg)^{-1}]$ is finite \'{e}tale. We construct $A$ as in the paragraph after Lemma~\ref{new complete}, and construct $B$, $A_{\infty,0}$, $A_\infty$ as in the paragraph before Theorem~\ref{andre}. $\Am$ will be the perfectoid algebra $\Ab[p^{-1}]$ and $\Bm'$ will be $\Ba[(pg)^{-1}]$.
Let $\Bm$ and $\Bm^\circ$ be as in Theorem~\ref{andre}.
\end{notation}

The next lemma is well-known to some experts. We record it here for completeness.
\begin{lemma}\label{Bmalmostflat}
Let terminology be as in Notation~\ref{Notation}.
$\Bm^\circ$ is $(pg)^{1/p^\infty}$-almost flat over $R$.
\end{lemma}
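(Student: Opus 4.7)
The plan is to factor the composition $R \to \Bm^\circ$ through
$$R \to A \to A_{\infty,0} \to A_\infty \to \Bm^\circ$$
and establish (almost) flatness of each arrow with respect to $(pg)^{1/p^\infty}$; the composition of flat and almost-flat maps, all with respect to the same ideal, is again almost flat, giving the result.

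First I would verify that $R \to A$ and $A \to A_{\infty,0}$ are \emph{honestly} flat. For the former, $W(k) \to W(k^{1/p^\infty})$ is flat (both are DVRs with uniformizer $p$ and $W(k^{1/p^\infty})$ is $p$-torsion-free), so $A_0 := R \otimes_{W(k)} W(k^{1/p^\infty})$ is flat over $R$; since $A$ is the $p$-adic completion of $A_0$, we have $A/p^n = A_0/p^n$ flat over $R/p^n$ for all $n$, and the local flatness criterion---applied using that $A$ is Noetherian by the previous lemma, $p$-torsion-free, and $p$-adically complete---yields $A$ flat over $R$. For the latter, the ring $A[p^{1/p^\infty}, x_1^{1/p^\infty}, \ldots, x_{d-1}^{1/p^\infty}]$ is a filtered colimit over $n$ of the finite free $A$-algebras $A[T_0,\ldots,T_{d-1}]/(T_0^{p^n}-p,\, T_i^{p^n}-x_i)$, hence $A$-flat; the same local criterion then gives $A_{\infty,0}$ flat over $A$.

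For the final two arrows I would invoke the results already cited in the excerpt: Andr\'e's theorem gives that $A_\infty$ is $(pg)^{1/p^\infty}$-almost faithfully flat over $A_{\infty,0}$ modulo every power of $p$, and Theorem~\ref{andre}(2) gives that $\Bm^\circ/p^m$ is $(pg)^{1/p^\infty}$-almost finite \'etale (hence almost flat) over $A_\infty/p^m$ for every $m$. Combined with the honest flatness above, this shows that $\Bm^\circ/p^m$ is $(pg)^{1/p^\infty}$-almost flat over $R/p^m$ for every $m$.

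The main obstacle is the upgrade from these mod-$p^m$ statements to full almost flatness of $\Bm^\circ$ over $R$. The argument I would run: for any finitely generated $R$-module $N$ and each $i \geq 1$, the $p$-torsion-freeness of $\Bm^\circ$ gives $\Tor_i^R(\Bm^\circ, N/p^m N) \cong \Tor_i^{R/p^m}(\Bm^\circ/p^m, N/p^m N)$, which is $(pg)^{1/p^\infty}$-almost zero; the long exact sequence coming from $0 \to N \xrightarrow{p^m} N \to N/p^m N \to 0$ then shows that $p^m$ acts as a $(pg)^{1/p^\infty}$-almost isomorphism on $\Tor_i^R(\Bm^\circ, N)$ for every $m$. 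Since $\Bm^\circ$ is derived $p$-complete over the Noetherian ring $R$ (being $p$-torsion-free and classically $p$-adically complete), each $\Tor_i^R(\Bm^\circ, N)$ is derived $p$-complete as well, and a module which is derived $p$-complete and $p$-divisible after almostification must be almost zero. Reducing arbitrary $R$-modules to finitely generated ones via the compatibility of $\Tor$ with filtered colimits completes the proof.
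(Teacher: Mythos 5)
Your proof is correct in outline and, at the top level, follows the same route as the paper's: both first establish that $\Bm^\circ/p^m$ is $(pg)^{1/p^\infty}$-almost flat over $R/p^m$ for every $m$ (via Theorem~\ref{andre}(2), Andr\'e's almost faithful flatness of $\Ab/p^m$ over $\Aa/p^m$, and flatness of $R\to A$ and $A\to \Aa$ modulo $p^m$), and then upgrade to almost flatness over $R$ using that $\Bm^\circ$ is $p$-torsion-free and $p$-adically complete. Where you genuinely differ is the upgrade step: the paper simply invokes the argument of \cite[Lemma 2.3]{MS}, a successive-approximation argument in the spirit of Lemma~\ref{intersection}, whereas you prove it directly via derived completeness. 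Your version of that step is sound: the change of rings $\Tor_i^R(\Bm^\circ,\bar N)\cong \Tor_i^{R/p^m}(\Bm^\circ/p^m,\bar N)$ holds because $p^m$ is a nonzerodivisor on both $R$ and $\Bm^\circ$; each $T=\Tor_i^R(\Bm^\circ,N)$ with $N$ finitely generated is derived $p$-complete, being the homology of a complex of finite direct sums of copies of the ($p$-adically complete, hence derived complete) module $\Bm^\circ$, and derived complete modules are closed under kernels and cokernels; and the concluding ``almost derived Nakayama'' is valid provided you use both halves of your almost isomorphism: since $T[p^n]$ and $T/p^nT$ are almost zero for all $n$, the exact sequence $0\to {\lim}^1\, T[p^n]\to T\to \varprojlim\, T/p^nT\to 0$ (valid for derived $p$-complete $T$), together with the fact that inverse limits and ${\lim}^1$'s of almost zero modules are almost zero, gives that $T$ is almost zero. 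Almost surjectivity of $p$ alone, which is how you phrase the principle, would not obviously suffice, since derived complete modules need not be $p$-adically separated. What your route buys is a self-contained proof replacing the citation of \cite{MS}; what it costs is the derived-completion formalism.

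Two small repairs. The sequence $0\to N\xrightarrow{p^m}N\to N/p^mN\to 0$ is exact only when $p^m$ is a nonzerodivisor on $N$, so your long exact sequence step does not apply verbatim to an arbitrary finitely generated $N$: either split off the $p$-power torsion submodule (killed by a fixed power of $p$, as $R$ is Noetherian), for which your change-of-rings isomorphism already gives almost vanishing of the higher $\Tor$'s, and run the argument on the torsion-free quotient; or reduce to prime cyclic modules $R/P$ by a prime filtration and treat $p\in P$ and $p\notin P$ separately. Second, the classical local flatness criterion carries Noetherian hypotheses and does not literally apply to the non-Noetherian ring $\Aa$; this is harmless, because all you use downstream is flatness of $A/p^m\to \Aa/p^m$, which is immediate from the colimit-of-finite-free-algebras description before $p$-adic completion (and flatness of $R\to A$, which the paper also uses, is fine since $A$ is Noetherian).
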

\begin{proof}
By Theorem~\ref{andre} we know $\Bm^\circ/p^m$ is $(pg)^{1/p^\infty}$-almost flat over $A_\infty/p^m$, and hence $(pg)^{1/p^\infty}$-almost flat over $A/p^m$ by construction. Since $A$ is flat over $R$, $\Bm^\circ/p^m$ is $(pg)^{1/p^\infty}$-almost flat over $R/p^m$ for all $m$. But we also know that $\Bm^\circ$ is $p$-torsion-free and $p$-adically complete since it is integral perfectoid. Now the same argument as in \cite[Lemma 2.3]{MS} (simply replace $A$ by $R$ and $S$ by $\Bm^\circ$) shows that $\Bm^\circ$ is $(pg)^{1/p^\infty}$-almost flat over $R$.
\end{proof}

%\footnote{In general, if $A\to B$ is a map of $p$-adically complete and $p$-torsion-free rings with A Noetherian and $A/p \to B/p$ flat, then $A \to B$ is flat: see Remark 4.31 of \cite{BMS}.}

\begin{lemma}\label{almost map}
Let terminology be as in Notation~\ref{Notation}.
There exists a $(pg)^{1/p^\infty}$-almost map from $\Bm^\circ$ to $S^{pg}=R^{pg}$.
\end{lemma}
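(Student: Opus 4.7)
By Remark~\ref{Bo}, $\Bm^\circ$ is $(pg)^{1/p^\infty}$-almost isomorphic to the integral closure $C$ of $\Ab$ in $\Bm'=\Ba[(pg)^{-1}]$, so it suffices to produce a ring map $C\to R^{pg}$. The strategy is to build compatible maps $B\to R^{++}$ and (in the almost category) $\Ab\to R^{++}$, combine them into a map $\Ba\to R^{++}$, invert $pg$, and observe that elements of $C$ map to elements integral over $R^{++}$ inside $R^{++}[(pg)^{-1}]$, hence to $R^{pg}$. Composition with the almost isomorphism of Remark~\ref{Bo} then yields the desired $(pg)^{1/p^\infty}$-almost map out of $\Bm^\circ$.

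To build $B\to R^{++}$, fix compatible systems of $p^n$-th roots of $p$, $x_1,\ldots,x_{d-1}$, and $g$ in $R^+$, which exist because $R^+$ is absolutely integrally closed. By Cohen's structure theorem $W(k)\subseteq R$; since $R^{++}/p\supseteq R^+/p$ contains $k^{1/p^\infty}$ and $R^{++}$ is $p$-adically complete and $p$-torsion-free, the universal property of Witt vectors yields $W(k^{1/p^\infty})\to R^{++}$ compatible with $W(k)\to R\to R^{++}$. Tensoring and $p$-adically completing gives $A\to R^{++}$; sending the formal $p^n$-th roots of $p$ and the $x_i$ to the chosen ones and $p$-adically completing extends this to $\Aa\to R^{++}$. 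Since $S\to R^+\to R^{++}$ agrees with $A\to R^{++}$ on $R$, the universal property of the tensor product produces $B=S\otimes_RA\to R^{++}$.

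For $\Ab\to R^{++}$, first extend $\Aa\to R^{++}$ via $T^{1/p^n}\mapsto g^{1/p^n}$ to a map $\Aa[T^{1/p^\infty}]/(T-g)\to R^{++}$, and $p$-adically complete to obtain $D:=\Aa\langle T^{1/p^\infty}\rangle/(T-g)\to R^{++}$. The integral closure $\overline D$ of $D$ in $D[1/p]$ maps into the integral closure of $R^{++}$ in $R^{++}[1/p]\subseteq R^{++}[(pg)^{-1}]$, which is contained in $R^{pg}$; by Lemma~\ref{new complete}, $R^{++}\to R^{pg}$ is a $(pg)^{1/p^\infty}$-almost isomorphism, and the same holds after $p$-adic completion, so $p$-adically completing $\overline D\to R^{pg}$ gives the desired almost map $\Ab=\widehat{\overline D}\to R^{++}$, compatible with $B\to R^{++}$ on $A$. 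Tensoring, inverting $pg$, and taking integral closures over $\Ab$ in $\Bm'$ then yields $C\to R^{pg}$.

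The main obstacle is the passage from $\overline D$ to its $p$-adic completion $\Ab$: $R^{pg}$ is not a priori $p$-adically complete, so the ring map $\overline D\to R^{pg}$ does not literally complete to a map out of $\Ab$. Lemma~\ref{new complete} is precisely what provides the slack needed to construct the map in the $(pg)^{1/p^\infty}$-almost category, which matches the setting of the statement.
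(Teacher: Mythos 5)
Your proposal is correct and follows essentially the same route as the paper: embed $\Aa$ into $R^{++}=S^{++}$ by choosing compatible $p$-power roots, send $T^{1/p^k}\mapsto g^{1/p^k}$, use the description of $\Ab$ as the $p$-adically completed integral closure of $\Aa\langle T^{1/p^\infty}\rangle/(T-g)$ to get an almost map $\Ab\to R^{++}$, tensor with $S$ over $R$, extend to integral closures landing in $S^{pg}$, and finish with Remark~\ref{Bo}. The only (harmless) variation is that you handle the completion step via the $(pg)^{1/p^\infty}$-almost isomorphism $R^{++}\to R^{pg}$ of Lemma~\ref{new complete}, where the paper instead uses that $S^{++}$ is integral perfectoid, hence $(p^{1/p^\infty})$-almost isomorphic to its integral closure in $S^{++}[1/p]$.
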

\begin{proof}
As $W(k^{1/p^\infty})$ is, up to $p$-adic completion, integral over $W(k)$, we get an embedding $W(k^{1/p^\infty})\to R^{++}$ and hence a map $A \to R^{++}$ (recall that $A$ is the $p$-adic completion of $R\otimes_{W(k)} W(k^{1/p^\infty})$). We next fix an embedding $$A[p^{1/p^{\infty}},x_1^{1/p^{\infty}},\ldots,x_{d-1}^{1/p^{\infty}}]\to R^{++}.$$ This in turn induces an embedding of the $p$-adic completions of the rings, i.e., an embedding $\alpha:\Aa\to R^{++}=S^{++}$. We next extend this to a homomorphism $\beta:\Aa\langle T^{1/p^\infty}\rangle \to S^{++}$ which sends $T^{1/p^k}$ to $g^{1/p^k}$.
In particular, since $\beta(T-g)=0$,
we get an induced homomorphism $\Aa\langle T^{1/p^\infty}\rangle/(T-g)\to S^{++}$. Finally, since $S^{++}$ is integral perfectoid, it is (at least $(p^{1/p^\infty})$-almost) isomorphic to the integral closure of $S^{++}$ in $S^{++}[1/p]$. Hence by the description of $A_\infty$, we get a  $(p^{1/p^\infty})$-almost map $\gamma: \Ab\to S^{++}$.
This induces $1\otimes \gamma:S\otimes_R\Ab\to S^{++}$. But $S\otimes_R\Ab=S\otimes_R(A\otimes_A\Ab)=\Ba$.
So this map extends to an almost map $\delta$ from the integral closure of $\Ba$ in $\Ba[(pg)^{-1}]$ to $S^{pg}$.
As $\Bm^\circ$ is $(pg)^{1/p^\infty}$-almost isomorphic to the integral closure of $A_\infty$ in $\Ba[(pg)^{-1}]$ by Remark~\ref{Bo}, we get a $(pg)^{1/p^\infty}$-almost map from $\Bm^\circ$ to $S^{pg}$. The proof is complete.
\end{proof}

We are ready to prove our main theorem in the unramified case.

\begin{thm}\label{main}
Let terminology be as in Notation~\ref{Notation}.
If $I$ is an ideal of $R$ and $J\subseteq R$, then $(pg)^{1/p^\infty}(IS:J)\subseteq (I:J)\Bm^\circ$ and consequently $(IS:J)\subseteq (I:J)S^{epf}$.
\end{thm}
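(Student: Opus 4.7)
The strategy is to leverage the $(pg)^{1/p^\infty}$-almost flatness of $\Bm^\circ$ over $R$ (Lemma~\ref{Bmalmostflat}) to transfer the colon containment to the perfectoid algebra $\Bm^\circ$, then apply the $(pg)^{1/p^\infty}$-almost map $\Bm^\circ \to S^{pg}$ of Lemma~\ref{almost map} to return to the ring of interest, and finally invoke Lemma~\ref{new complete} to conclude membership in $(I:J) S^{epf}$.

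First I will produce a natural ring map $S \to \Bm^\circ$: the composition $S \to B = S \otimes_R A \to B \otimes_A A_\infty$ lands, after localizing to $\Bm'$, in the integral closure of $A_\infty$ in $\Bm'$ (since $B$ is module-finite, hence integral, over $A$), which by Remark~\ref{Bo} sits inside $\Bm^\circ$. Given $x \in (IS:J)$, its image in $\Bm^\circ$ then satisfies $xJ \subseteq IS \subseteq I\Bm^\circ$, i.e.\ $x \in (I\Bm^\circ :_{\Bm^\circ} J)$.

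The heart of the argument is a standard almost-flatness calculation. I would choose generators $j_1,\dots,j_k$ of $J$ and consider the exact sequence of $R$-modules
\[
0 \to (I:J)/I \to R/I \xrightarrow{(\cdot j_1,\dots,\cdot j_k)} (R/I)^k.
\]
Tensoring with $\Bm^\circ$ and using Lemma~\ref{Bmalmostflat} (so that the relevant $\Tor_1^R(-,\Bm^\circ)$ terms are $(pg)^{1/p^\infty}$-almost zero), the natural map $((I:J)/I) \otimes_R \Bm^\circ \to (I\Bm^\circ :_{\Bm^\circ} J)/I\Bm^\circ$ has $(pg)^{1/p^\infty}$-almost zero cokernel. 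Combined with the trivial inclusion $I \subseteq (I:J)$, this gives $(pg)^{1/p^\infty}(I\Bm^\circ :_{\Bm^\circ} J) \subseteq (I:J) \Bm^\circ$, which applied to the element $x$ yields the first assertion $(pg)^{1/p^\infty}(IS:J) \subseteq (I:J)\Bm^\circ$.

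For the second assertion I would then compose with the $(pg)^{1/p^\infty}$-almost map $\Bm^\circ \to S^{pg}$ from Lemma~\ref{almost map}. Setting $c = pg$, for every $\epsilon, N > 0$ this yields $c^{2\epsilon} x \in (I:J)S^c \subseteq ((I:J)S, p^N) S^c$. The existence of $g$ with $\Ht(p,g)R = 2$ forces $\dim R = \dim S \geq 2$, so Lemma~\ref{new complete} applies to the complete local domain $S$ and the ideal $(I:J)S$, concluding $x \in ((I:J)S)^{epf} = (I:J) S^{epf}$. I expect the main technical obstacle to be extracting the almost colon identity in $\Bm^\circ$ from almost flatness cleanly; the rest is careful bookkeeping with $(pg)^{1/p^\infty}$-almost maps already constructed in the paper.
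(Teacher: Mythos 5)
Your proposal is correct and follows essentially the same route as the paper: pass from $(IS:J)$ into $(I\Bm^\circ:J)$ via the chain $S\to B\to B\otimes_A\Ab\to\Bm^\circ$, use the $(pg)^{1/p^\infty}$-almost flatness of $\Bm^\circ$ over $R$ (Lemma~\ref{Bmalmostflat}) to get $(pg)^{1/p^\infty}(I\Bm^\circ:J)\subseteq(I:J)\Bm^\circ$, then apply the almost map to $S^{pg}$ (Lemma~\ref{almost map}) and conclude with Lemma~\ref{new complete}. The only difference is that you spell out, via the sequence $0\to(I:J)/I\to R/I\to(R/I)^k$ and the vanishing of the relevant almost Tor's, the step that the paper simply asserts as a consequence of almost flatness; this is a correct elaboration, not a different argument.
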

\begin{proof}
Because there exists a chain of maps:
$$S\to B\to B\otimes \Ab \to \Bm^\circ,$$
we have $(IS:J)\subseteq (I\Bm^\circ:J)$ and the later one is $(pg)^{1/p^\infty}$-almost isomorphic to $(I:J)\Bm^\circ$ because $\Bm^\circ$ is $(pg)^{1/p^\infty}$-almost flat over $R$ by Lemma~\ref{Bmalmostflat}. Thus $(pg)^{1/p^\infty}(IS:J)\subseteq (I:J)\Bm^\circ$. Next, by Lemma \ref{almost map} there is a $(pg)^{1/p^\infty}$-almost map from $\Bm^\circ$ to $S^{pg}=R^{pg}$.
This implies that $(pg)^{1/p^\infty}(IS:J)\subseteq (I:J) S^{pg}$.

%This implies that $(pg)^{1/p^\infty}(I:J)\Bm^\circ\in (I:J) S^{pg}$.
So for every $z\in (IS:J)$ and every $\epsilon=\frac{1}{p^t}$, $(pg)^\epsilon z\in (I:J)S^{pg}$, and thus $z\in (I:J)S^{epf}$ by Lemma~\ref{new complete}.
\end{proof}

Now we can prove the usual form of colon-capturing:
%I tweaked  the first sentence of the next two results.

\begin{cor}\label{CC}
Let terminology be as in Notation~\ref{Notation}.
 Suppose $y_1,\dots,y_n$ are parameters in $S$.  Then $(pg)^{1/p^\infty}((y_1,\dots,y_{n-1})\Bm^\circ :y_n)\subseteq (y_1,\dots,y_{n-1})\Bm^\circ$. As a consequence, we have $((y_1,\ldots,y_{n-1})S:y_n)\subseteq (y_1,\ldots,y_{n-1})S^{epf}$.
\end{cor}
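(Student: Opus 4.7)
The second statement of the corollary follows from the first exactly as in the last paragraph of the proof of Theorem~\ref{main}: for $z\in S$ with $y_nz\in (y_1,\dots,y_{n-1})S$, view $z$ in $\Bm^\circ$ via $S\to B\otimes \Ab\to \Bm^\circ$, apply the almost containment to obtain $(pg)^\epsilon z\in (y_1,\dots,y_{n-1})\Bm^\circ$ for every $\epsilon=1/p^t$, push to $S^{pg}$ via Lemma~\ref{almost map}, and finish with Lemma~\ref{new complete}. So the real work is the first (almost-in-$\Bm^\circ$) statement.

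For that, my plan is to reduce the colon involving the $S$-parameters $y_i$ to a colon involving $R$-parameters, and then invoke the almost-flatness ingredient of Theorem~\ref{main}. Since $R\to S$ is module-finite of the same dimension $d$, each contracted ideal $(y_1,\dots,y_i)S\cap R$ has height $i$ in $R$, so prime avoidance produces $x_1,\dots,x_n\in R$ forming part of an SOP of $R$ with $x_i\in (y_1,\dots,y_i)S$ and $x_i\notin (y_1,\dots,y_{i-1})S$ for each $i$. Writing $x_i=\sum_{j\leq i}c_{ij}y_j$ with $c_{ij}\in S$ gives a lower triangular transition matrix with nonzero diagonal; setting $\delta_k=c_{11}\cdots c_{kk}\in S\setminus\{0\}$, Cramer's rule applied to each leading $k\times k$ minor yields $\delta_k y_j\in (x_1,\dots,x_j)S$ for every $j\leq k$, and in particular $\delta_n y_n\equiv \delta_{n-1}x_n\pmod{(x_1,\dots,x_{n-1})S}$.

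Now suppose $z\in \Bm^\circ$ satisfies $y_nz\in (y_1,\dots,y_{n-1})\Bm^\circ$. Multiplying by $\delta_n$ and combining the two displayed facts yields $\delta_{n-1}x_nz\in (x_1,\dots,x_{n-1})\Bm^\circ$. Since $x_1,\dots,x_n$ is a regular sequence in $R$, Lemma~\ref{Bmalmostflat} (used exactly as in the proof of Theorem~\ref{main}) gives the almost colon identity $(pg)^{1/p^\infty}\bigl((x_1,\dots,x_{n-1})\Bm^\circ:x_n\bigr)\subseteq (x_1,\dots,x_{n-1})\Bm^\circ$, and hence $(pg)^{1/p^\infty}\delta_{n-1}z\in (x_1,\dots,x_{n-1})\Bm^\circ\subseteq (y_1,\dots,y_{n-1})\Bm^\circ$.

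The main obstacle is absorbing the multiplier $\delta_{n-1}\in S$, which I would attack by induction on $n$. The base case $n=1$ uses integrality directly: a nonzero $y_1\in S$ admits a nonzero $a\in R\cap y_1S$, so $y_1z=0$ forces $az=0$ and Lemma~\ref{Bmalmostflat} gives $(pg)^{1/p^\infty}z=0$. For the inductive step, integrality of $\delta_{n-1}$ over $R$ produces a nonzero $e\in R\cap \delta_{n-1}S$, and writing $e=\delta_{n-1}\tau$ with $\tau\in S$ converts the containment above into $(pg)^{1/p^\infty}ez\in (y_1,\dots,y_{n-1})\Bm^\circ$ with $e\in R$. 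Varying the choice of $(x_i)$ and using prime avoidance, one can arrange $e$ to lie outside every associated prime of $S/(y_1,\dots,y_{n-1})S$, so that $y_1,\dots,y_{n-1},e$ is a regular sequence in $S$; then combining the inductive hypothesis (controlling $(y_1,\dots,y_{n-1})$-colons on $\Bm^\circ$) with a further application of Lemma~\ref{Bmalmostflat} (controlling the $R$-element $e$) absorbs $e$ and delivers $(pg)^{1/p^\infty}z\in (y_1,\dots,y_{n-1})\Bm^\circ$.
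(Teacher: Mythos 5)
Your setup and the derivation of the second statement from the first are fine, and the Cramer reduction does correctly give $(pg)^{1/p^\infty}\delta_{n-1}z\in (x_1,\dots,x_{n-1})\Bm^\circ\subseteq(y_1,\dots,y_{n-1})\Bm^\circ$ via Lemma~\ref{Bmalmostflat}. The gap is in the final paragraph, where the multiplier is absorbed, and it is twofold. First, the prime-avoidance claim is not available: since $S$ is a domain, $c_{11}$ is forced to be $x_1/y_1$ for some $x_1\in y_1S\cap R$, and there are examples where every such quotient lies in $y_1S$. For instance take $R=W(k)[[x,y]]$, $S=R[\sqrt{x}]$, $y_1=\sqrt{x}$: then $y_1S\cap R=xR$, so $c_{11}\in \sqrt{x}\,S$ for every admissible choice of $x_1$; hence $\delta_{n-1}$, and with it every $e\in R\cap\delta_{n-1}S$, lies in every prime containing $(y_1,\dots,y_{n-1})S$, and no variation of the $x_i$ makes $y_1,\dots,y_{n-1},e$ parameters (also note $y_1,\dots,y_{n-1},e$ cannot be asked to be a regular sequence, since $S$ need not be Cohen--Macaulay). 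Second, even granting a good $e\in R$, absorbing it from $(pg)^{1/p^\infty}ez\in(y_1,\dots,y_{n-1})\Bm^\circ$ amounts to proving $(pg)^{1/p^\infty}((y_1,\dots,y_{n-1})\Bm^\circ:e)\subseteq(y_1,\dots,y_{n-1})\Bm^\circ$ for a length-$n$ sequence whose earlier entries are the original $y_i\notin R$; this is not an instance of your induction on $n$ (which only covers shorter sequences) nor of Lemma~\ref{Bmalmostflat} (which only sees sequences inside $R$), so the step is circular: it is essentially the statement being proven.

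This is precisely the difficulty the paper's proof is organized around, and its mechanism avoids ever creating an $S$-multiplier. Instead of trading all the $y_i$ for $R$-elements at once, it inducts on the number of $y_i$ lying outside $R$. When $y_n\notin R$ it picks $w_n\in(y_1,\dots,y_n)S\cap R$ avoiding the minimal primes of $(y_1,\dots,y_{n-1})S$ (possible because the contracted ideal has height $n$); since $w_n$ is an $S$-linear combination of $y_1,\dots,y_n$, the colon with $w_n$ contains the colon with $y_n$ with no multiplier, and $y_1,\dots,y_{n-1},w_n$ are again parameters with one more entry in $R$. When $y_n\in R$ but some $y_1\notin R$, it rotates the relation $\sum a_iy_i=0$ to place the non-$R$ element last, applies the previous case, and substitutes back at the harmless cost of an extra factor $(pg)^{\epsilon}$. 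If you want to repair your outline, replace the all-at-once Cramer reduction by this one-element-at-a-time replacement; the multiplier problem then never arises.
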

\begin{proof}
We first prove $(pg)^{1/p^\infty}((y_1,\dots,y_{n-1})\Bm^\circ :y_n)\subseteq (y_1,\dots,y_{n-1})\Bm^\circ$. Assume $k$ of the elements $y_i$ actually belong to $R$. If $k=n$, the claim is true because $\Bm^\circ$ is $(pg)^{1/p^\infty}$-almost flat over $R$ by Lemma~\ref{Bmalmostflat}.
We now complete the proof by induction on $n-k$; the case $n-k=0$ is done.
Suppose we have a counterexample with $n-k$ minimal.
First suppose $y_n\notin R$. Clearly $(y_1,\ldots,y_n)S\cap R$ is not contained in any minimal prime of $(y_1,\ldots,y_{n-1})S$; so there exists $w_n\in R$ such that $((y_1,\ldots,y_{n-1})\Bm^\circ:w_n)\supseteq((y_1,\ldots,y_{n-1})\Bm^\circ:y_n)$ and $y_1,\ldots,y_{n-1},w_n$ are parameters in $S$.
By the minimality assumption, we have
$(pg)^{1/p^\infty}((y_1,\ldots,y_{n-1})\Bm^\circ:y_n)\subseteq (pg)^{1/p^\infty}((y_1,\ldots,y_{n-1})\Bm^\circ:w_n)\subseteq (y_1,\ldots,y_{n-1})\Bm^\circ$ and so we did not have a counterexample.
Thus we know $y_n\in R$.
Without loss of generality, we may now assume $y_1\notin R$.
If $a_n\in ((y_1,\ldots,y_{n-1})\Bm^\circ:y_n)$, we have an equation $a_1y_1+\cdots+a_ny_n=0$ with each $a_i\in\Bm^\circ$.
Then $a_1\in ((y_2,\ldots,y_n)\Bm^\circ:y_1)$.
The argument we used above now shows $(pg)^{1/p^\infty}a_1\in (y_2,\ldots,y_n)\Bm^\circ$.
Consider any $\epsilon=1/p^t$.
We have $(pg)^\epsilon a_1=\sum_{i=2}^nb_iy_i$.
Then $\sum_{i=2}^n(b_iy_1+(pg)^\epsilon a_i)y_i=0$ and so $b_ny_1+(pg)^\epsilon a_n\in ((y_2,\ldots,y_{n-1})\Bm^\circ:y_n)$.
Since we have decreased $n$ but not $k$, $n-k$ has been decreased and so
$(pg)^\epsilon(b_ny_1+(pg)^\epsilon a_n)\in (y_2,\ldots,y_{n-1})\Bm^\circ$.
Thus $(pg)^{2\epsilon}a_n\in (y_1,\ldots,y_{n-1})\Bm^\circ$.
Hence $(pg)^{1/p^\infty}a_n\in (y_1,\ldots,y_{n-1})\Bm^\circ$. This finishes the proof of the first conclusion.

Finally, for every $z\in ((y_1,\dots,y_{n-1})S:y_n)$ and every $\epsilon=\frac{1}{p^t}$, $z\in ((y_1,\dots,y_{n-1})\Bm^\circ:y_n)$ and so we have $(pg)^{\epsilon}z\in (y_1,\ldots,y_{n-1})\Bm^\circ$. Now by Lemma~\ref{almost map}, $(pg)^{\epsilon}z\in (y_1,\ldots,y_{n-1})S^{pg}$ and thus $z\in (y_1,\ldots,y_{n-1})S^{epf}$ by Lemma~\ref{new complete}.
\end{proof}
% added an and to last paragraph

Our next objective is to extend Theorem~\ref{main} to the ramified case.
\begin{thm}\label{new main}
Let $R'$ be a (possibly ramified) complete regular local ring of mixed characteristic $p$ {that has an F-finite residue field} and let the integral domain $S$ be a finite extension of $R'$.
If $I$ is an ideal of $R'$ and $J\subseteq R'$, then $(IS:J)\subseteq (I:J)S^{epf}$.
\end{thm}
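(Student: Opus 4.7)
The plan is to emulate the proof of Theorem~\ref{main}, replacing the unramified regular local ring $R$ throughout by the ramified $R'$. By Cohen's structure theorem and the $F$-finite hypothesis on the residue field $k$, we may write
\[
R'=W(k)[[x_1,\ldots,x_{d-1},T]]/(f(T)),
\]
where $f(T)$ is an Eisenstein polynomial; equivalently, $R'=V[[x_1,\ldots,x_{d-1}]]$ with $V=W(k)[T]/(f(T))$ a complete ramified DVR whose uniformizer $\pi$ (the image of $T$) satisfies $\pi^e=pu$ for some unit $u\in V$.

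The heart of the argument is to construct ramified analogs of the integral perfectoid algebras in Notation~\ref{Notation}. I would set $A'$ to be the $p$-adic completion of $R'\otimes_{W(k)}W(k^{1/p^\infty})$, and $A'_{\infty,0}$ to be the $p$-adic completion of
\[
A'[p^{1/p^\infty},\,\pi^{1/p^\infty},\,x_1^{1/p^\infty},\,\ldots,\,x_{d-1}^{1/p^\infty}].
\]
The crucial point is that $A'_{\infty,0}$ is again an integral perfectoid $K^\circ$-algebra; this amounts to checking that Frobenius is surjective on $A'_{\infty,0}/p$, which is exactly where the $F$-finiteness of $k$ enters, by essentially the same computation as in the footnote of Notation~\ref{Notation}. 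The extra generator $\pi^{1/p^\infty}$ is compatible with $p^{1/p^\infty}$ through the relation $\pi^e=pu$, and modulo $p$ it slots naturally alongside the $x_i^{1/p^\infty}$. Applying Andr\'{e}'s Theorem~\ref{andre} with $g\in R\subseteq R'$ chosen so that $\Ht(p,g)R'=2$ and $R'[(pg)^{-1}]\to S[(pg)^{-1}]$ is finite \'{e}tale (which can be arranged because $R\hookrightarrow R'$ itself becomes finite \'{e}tale after inverting $p$, so a suitable $g$ from the unramified setup continues to work) then yields $A'_\infty$ and a perfectoid algebra $\mathcal B'^\circ$ associated to the finite \'{e}tale cover $B'[(pg)^{-1}]$ of $A'_\infty[(pg)^{-1}]$, where $B':=S\otimes_{R'}A'$.

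With $\mathcal B'^\circ$ in hand, the analogs of Lemma~\ref{Bmalmostflat} and Lemma~\ref{almost map} follow by exactly the same arguments as in the unramified case, with $R'$, $A'$, $\mathcal B'^\circ$ replacing $R$, $A$, $\mathcal B^\circ$: namely, $\mathcal B'^\circ$ is $(pg)^{1/p^\infty}$-almost flat over $R'$, and there is a $(pg)^{1/p^\infty}$-almost $R'$-algebra map $\mathcal B'^\circ\to S^{pg}$. With these two ingredients over the ramified base, the proof of Theorem~\ref{main} carries over verbatim: almost flatness yields $(pg)^{1/p^\infty}(IS:J)\subseteq(I:J)\mathcal B'^\circ$, and then transferring via the almost map and invoking Lemma~\ref{new complete} gives $(IS:J)\subseteq(I:J)S^{epf}$ for all ideals $I,J\subseteq R'$. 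The main obstacle is the first step --- verifying that the ramified construction $A'_{\infty,0}$ is still integral perfectoid in the presence of the extra uniformizer $\pi^{1/p^\infty}$; once this is in hand, the rest of the argument is a formal transcription of Section~3's results.
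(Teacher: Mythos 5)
Your reduction of everything to the single claim that the ramified construction $A'_{\infty,0}$ (the $p$-adic completion of $A'[p^{1/p^\infty},\pi^{1/p^\infty},x_1^{1/p^\infty},\dots,x_{d-1}^{1/p^\infty}]$) is integral perfectoid is exactly where the argument breaks down, and ``essentially the same computation as in the footnote'' does not cover it. The footnote's verification uses in an essential way that $p,x_1,\dots,x_{d-1}$ is a regular system of parameters of a ring formally smooth over $W(k^{1/p^\infty})$ mod $p$, which gives the explicit description $A_{\infty,0}/p\cong (k^{1/p^\infty}\otimes_k k[[\bar x]])[\bar p^{1/p^\infty},\bar x^{1/p^\infty}]$ on which surjectivity of Frobenius can be checked (this is where F-finiteness enters). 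In the ramified case $p=\pi^e\cdot(\mathrm{unit})$, so $p$ is not part of a regular system of parameters, $R'/p$ is non-reduced, and the subring of $(R')^+$ generated by chosen compatible roots $p^{1/p^n},\pi^{1/p^n}$ has relations only at the bottom level: for instance $x=\pi^{e/p}$ satisfies $x^p=\pi^e=pu$, so $x^p\in p A'_{\infty,0}$, but $x\in p^{1/p}A'_{\infty,0}$ would force $u^{1/p}$ times a $p$-th root of unity to lie in the ring, which is not visible from the construction. Thus the isomorphism condition $S/p^{1/p}\xrightarrow{\ \sim\ }S/p$ (injectivity, not just surjectivity of Frobenius mod $p$), as well as $p$-torsion-freeness after completion, are genuinely in doubt for the naive ring you write down; at best one could hope it is almost isomorphic to an integral perfectoid ring, and that would require a real argument. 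This is precisely the difficulty the definition of integral perfectoid is sensitive to, and it is not a formality.

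The paper avoids this entirely: it does not build any perfectoid algebra over $R'$. Instead, $R'$ is written (Cohen) as a finite extension of an \emph{unramified} complete regular local ring $R$, one keeps the very same $\Bm^\circ$ of Notation~\ref{Notation} (constructed over $R$, with $S$ viewed as module-finite over $R$), and the only new ingredient is the Claim that $\Bm^\circ$ is $(pg)^{1/p^\infty}$-almost flat over $R'$. That is proved by pure commutative algebra: since parameters of $S$ (hence of $R'$) form $(pg)^{1/p^\infty}$-almost regular sequences on $\Bm^\circ$ by Corollary~\ref{CC} --- note this uses that Corollary~\ref{CC} was proved for parameters of $S$, not merely of $R$ --- a descending induction on $i$, together with prime cyclic filtrations, the sequences $0\to R'/P\to R'/(x_1,\dots,x_h)\to C\to 0$, Koszul homology, and regularity of $R'$ (so $\Tor_i^{R'}$ vanishes for $i>\dim R'$), shows $\Tor_i^{R'}(N,\Bm^\circ)$ is almost zero for all $i>0$ and all finitely generated $N$. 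With that almost flatness over $R'$, the end of your argument (Lemma~\ref{almost map} plus Lemma~\ref{new complete}) goes through as in Theorem~\ref{main}. If you want to keep your strategy of working directly over the ramified base, you would need an honest proof that your $A'_{\infty,0}$ (or a suitable modification, e.g.\ adjoining only $\pi^{1/p^\infty}$ and changing the perfectoid base field accordingly, or passing to a larger almost-isomorphic ring) is integral perfectoid; as written, that cornerstone is missing.
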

\begin{proof}
$R'$ is a finite extension of an unramified complete local ring $R$. We choose $g$ so that $R[(pg)^{-1}]\to S[(pg)^{-1}]$  is finite \'{e}tale, and $(p,g)R$ is a height two ideal. We now use the standard framework of Notation~\ref{Notation}. We first note that, just as in the proof of Theorem~\ref{main}, our Lemma~\ref{almost map} and Lemma~\ref{new complete} will give us the desired result provided we can prove $(pg)^{1/p^\infty}(I\Bm^\circ:J)\subseteq (I:J)\Bm^\circ$. Thus, we will be done if we can show $\Bm^\circ$ is $(pg)^{1/p^\infty}$-almost flat over $R'$. This follows formally from the fact that $R'$ is regular and every system of parameters of $R'$ (in fact $S$) is a $(pg)^{1/p^\infty}$-almost regular sequence on $\Bm^\circ$ by Corollary~\ref{CC}. We give a detailed argument below.

\noindent\textbf{Claim.} $\Bm^\circ$ is $(pg)^{1/p^\infty}$-almost flat over $R'$.

\noindent{\textit{Proof of Claim.}} It is enough to show $\Tor_i^{R'}(N, \Bm^\circ)$ is $(pg)^{1/p^\infty}$-almost zero for all finitely generated $R'$-modules $N$ and every $i>0$. We use descending induction. This is clearly true when $i>\dim R'$ since $R'$ is regular. Now suppose $\Tor_{k+1}^{R'}(N, \Bm^\circ)$ is $(pg)^{1/p^\infty}$-almost zero for all finitely generated $N$. %added sentence break
We want to show $\Tor_k^{R'}(N, \Bm^\circ)$ is $(pg)^{1/p^\infty}$-almost zero for all finitely generated $N$. By considering a prime cyclic filtration of $N$, it is enough to prove this for $N=R'/P$. Let $h=\Ht P$. We can pick a regular sequence $x_1,\dots,x_h\in P$. Now $P$ is an associated prime of $(x_1,\dots,x_h)$ and %added and
so we have $0\to R'/P\to R'/(x_1,\dots, x_h)\to C\to 0$. The long exact sequence for $\Tor$ gives: $$\Tor_{k+1}^{R'}(C, \Bm^\circ)\to \Tor_{k}^{R'}(R'/P, \Bm^\circ)\to \Tor_{k}^{R'}(R'/(x_1,\dots,x_h), \Bm^\circ).$$ Now  $\Tor_{k}^{R'}(R'/(x_1,\dots,x_h), \Bm^\circ)=H_k(x_1,\dots,x_h, \Bm^\circ)$ is $(pg)^{1/p^\infty}$-almost zero because $x_1,\dots,x_h$ is a $(pg)^{1/p^\infty}$-almost regular sequence on $\Bm^\circ$ by Corollary~\ref{CC}, and $\Tor_{k+1}^{R'}(C, \Bm^\circ)$ is $(pg)^{1/p^\infty}$-almost zero by induction. It follows that $\Tor_{k}^{R'}(R'/P, \Bm^\circ)$ is $(pg)^{1/p^\infty}$-almost zero.
\end{proof}

\begin{remark}
The analog of Theorem~\ref{main} and Theorem~\ref{new main} in equal characteristic $p>0$ is true if we replace extended plus closure by tight closure (in fact, even by the plus closure).
If $R\to S$ is a module-finite extension of excellent local domains of characteristic $p>0$ with $R$ regular and $I,J$ are ideals of $R$, then we have $(IS:JS)\subseteq (I:J)S^*$. To see this, let $z\in (IS:JS)$, then since we have $R\to S\to R^+$, we know that $z\in (IR^+:JR^+)$. Since $R$ is regular and $R^+$ is a balanced big Cohen-Macaulay algebra in characteristic $p>0$, $R^+$ is faithfully flat over $R$ and thus $(IR^+:JR^+)=(I:J)R^+$. Hence $z\in (I:J)R^+\cap S=(I:J)S^+\subseteq (I:J)S^*$.
\end{remark}

\begin{thm}\label{Tor}
Let $R'$ be a (possibly ramified) complete regular local ring of mixed characteristic $p$ {that has an F-finite residue field} and let the integral domain $S$ be a finite extension of $R'$.
Let $M$ be an $R'$-module. Then there exists $c\neq 0\in R'$ such that the natural map $c^{\epsilon}\Tor_i^{R'}(S,M)/p^N\to \Tor_i^{R'}(S^+,M)/p^N$ is zero for all $i\geq 1$ and every $N,\epsilon>0$. Consequently, for every $\alpha\in \Tor_i^{R'}({R'}^+,M)/p^N$, there exists $c\neq 0\in R'$ such that $c^{\epsilon}\alpha=0$ for every $\epsilon$.
\end{thm}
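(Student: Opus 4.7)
My plan is to prove Theorem~\ref{Tor} via a diagram chase on a free resolution of $M$, combining three $(pg)^{1/p^\infty}$-almost mathematics inputs already established in the paper: the almost flatness of $\Bm^\circ$ over $R'$ proved during Theorem~\ref{new main}, the almost map $\Bm^\circ \to S^{pg}$ of Lemma~\ref{almost map}, and the almost isomorphism $S^{++} \to S^{pg}$ of Lemma~\ref{new complete}.

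First, I would reduce to $M$ finitely generated (since $\Tor$ commutes with filtered colimits) and fix an $R'$-free resolution $F_\bullet \to M$ with each $F_j$ finite free. Represent $\alpha \in \Tor_i^{R'}(S,M)$ by a cycle $z \in F_i \otimes_{R'} S$ and map it into $F_i \otimes_{R'} \Bm^\circ$. Since $\Bm^\circ$ is $(pg)^{1/p^\infty}$-almost flat over $R'$, the module $\Tor_i^{R'}(\Bm^\circ,M)$ is $(pg)^{1/p^\infty}$-almost zero for $i\ge 1$, so for any $\epsilon>0$ we get $(pg)^\epsilon z = d_{i+1}(y_\circ)$ in $F_i\otimes \Bm^\circ$ for some $y_\circ\in F_{i+1}\otimes\Bm^\circ$. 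Applying the almost map $\Bm^\circ\to S^{pg}$ of Lemma~\ref{almost map}, whose composition with $S\to\Bm^\circ$ is the canonical inclusion $S\to S^{pg}$, yields, for any $\epsilon'>0$, an honest relation $(pg)^{\epsilon+\epsilon'}z = d_{i+1}(\tilde y_{pg})$ in $F_i\otimes S^{pg}$ for some $\tilde y_{pg}\in F_{i+1}\otimes S^{pg}$.

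Next, because Lemma~\ref{new complete} gives $S^{++}\to S^{pg}$ as a $(pg)^{1/p^\infty}$-almost isomorphism and $S^+/p^N = S^{++}/p^N$ by the definition of $p$-adic completion, the induced map $F_\bullet\otimes_{R'} S^+/p^N \to F_\bullet\otimes_{R'} S^{pg}/p^N$ is a $(pg)^{1/p^\infty}$-almost isomorphism termwise (since each $F_j$ is free). Using almost surjectivity in degree $i+1$, for any $\epsilon''>0$ we find $y_+\in F_{i+1}\otimes S^+$ with $(pg)^{\epsilon''}\tilde y_{pg}\equiv y_+ \pmod{p^N(F_{i+1}\otimes S^{pg})}$; applying $d_{i+1}$ and substituting yields the difference $(pg)^{\epsilon+\epsilon'+\epsilon''}z - d_{i+1}(y_+)$, which lies in $F_i\otimes S^+$ and vanishes in $F_i\otimes S^{pg}/p^N$. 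Almost injectivity then provides, for any $\epsilon'''>0$, that $(pg)^{\epsilon+\epsilon'+\epsilon''+\epsilon'''}z \in d_{i+1}(F_{i+1}\otimes S^+) + p^N(F_i\otimes S^+)$. Since the total exponent is an arbitrary positive rational, $c:=pg$ works for all $i\ge 1$, $N$, and $\epsilon$.

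The main bookkeeping challenge is keeping the three successive $\epsilon$-losses under control and verifying that the composition $S\to\Bm^\circ\to S^{pg}$ really agrees with the natural inclusion (so that the image of $z$ is consistent across the three targets). The consequence is then formal: $R'^+ = \varinjlim S$ over finite domain extensions of $R'$ gives $\Tor_i^{R'}({R'}^+,M)/p^N = \varinjlim \Tor_i^{R'}(S,M)/p^N$, so any $\alpha$ lifts to some $\alpha_S \in \Tor_i^{R'}(S,M)/p^N$; since $S^+ = {R'}^+$, the main statement applied to that $S$ produces the required $c$.
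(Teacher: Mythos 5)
Your proposal is correct and follows essentially the same route as the paper: it relies on the same three inputs (the $(pg)^{1/p^\infty}$-almost flatness of $\Bm^\circ$ over $R'$ from the Claim in Theorem~\ref{new main}, the almost map $\Bm^\circ\to S^{pg}$ of Lemma~\ref{almost map}, and the almost isomorphism $S^{++}\to S^{pg}$ of Lemma~\ref{new complete} together with $S^+/p^N=S^{++}/p^N$), your chain-level chase being just the element-wise version of the paper's Tor-level diagram argument. The only point to make explicit is the final step: from $(pg)^{t}z\in d_{i+1}(F_{i+1}\otimes_{R'} S^+)+p^N(F_i\otimes_{R'} S^+)$ you still need the $p^N$-term to be a cycle (equivalently, the injectivity of $\Tor_i^{R'}(S^+,M)/p^N\to \Tor_i^{R'}(S^+/p^N,M)$), which holds because $p$ is a nonzerodivisor on $S^+$ and hence on the free modules $F_i\otimes_{R'}S^+$ --- this is precisely how the paper closes its proof.
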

\begin{proof}
As before, $R'$ is a finite extension of an unramified complete regular local ring $R$. We choose $g$ so that $R[(pg)^{-1}]\to S[(pg)^{-1}]$  is finite \'{e}tale, and $(p,g)R$ is a height two ideal. We now use the standard framework of Notation~\ref{Notation}. By the Claim in the proof of Theorem~\ref{new main}, $\Bm^\circ$ is $(pg)^{1/p^\infty}$-almost flat over $R'$. Thus $(pg)^{1/p^\infty}\Tor_i^{R'}(\Bm^\circ, M)=0$ for all $i>0$. It follows that  $(pg)^{1/p^\infty}\Tor_i^{R'}(S, M)\to \Tor_i^{R'}(S^{pg},M)$ is zero since it factors through $\Tor_i^{R'}(\Bm^\circ,M)$ by Lemma~\ref{almost map}. At this point, we note that $S^{pg}$ is $(pg)^{1/p^\infty}$-almost isomorphic to $S^{++}$ by Lemma~\ref{new complete}; %changed comma to semicolon
 thus the map $(pg)^{1/p^\infty}\Tor_i^{R'}(S,M)\to \Tor_i^{R'}(S^{++},M)$ is also zero. Finally, tensoring the commutative diagram
\[\xymatrix{
0 \ar[r] & S^+ \ar[r]^{\cdot p^N} \ar[d] & S^+ \ar[r]\ar[d] & S^+/p^N \ar[r] \ar[d]^= & 0 \\
0 \ar[r] & S^{++} \ar[r]^{\cdot p^N} & S^{++} \ar[r] & S^{++}/p^N \ar[r] & 0
}
\]
with $M$ induces a commutative diagram:
\[\xymatrix{
\Tor_i^{R'}(S^{+},M)/p^N \ar@{^{(}->}[r]\ar[d] & \Tor_i^{R'}(S^{+}/p^N,M) \ar[d]^= \\
\Tor_i^{R'}(S^{++},M)/p^N \ar@{^{(}->}[r] & \Tor_i^{R'}(S^{++}/p^N,M)
}
\]
By the injectivity of the map on the first line of the above diagram, in order to show the image of $(pg)^{1/p^\infty}\Tor_i^{R'}(S,M)$ is zero in $\Tor_i^{R'}(S^{+},M)/p^N$, it suffices to prove its image is zero in $\Tor_i^{R'}(S^{+}/p^N,M)$, which is clear because it factors through $\Tor_i^{R'}(S^{++},M)/p^N$ and we already know the image is zero in $\Tor_i^{R'}(S^{++},M)$.
\end{proof}

\begin{remark}
We cannot assert that $\Tor_i^{R'}({R'}^+,M)/p^N$ is almost zero or that ${R'}^+/p^N$ is almost flat because each $S$ requires its own $c$.
\end{remark}

Next we consider local cohomology.
We begin with a lemma.
This lemma follows from general results on derived completion (for example see \cite[Proposition 1.5]{GM}) and we believe it is known to experts.
However, as completions of modules which are not finite is a murky area to many of us, we offer a direct proof.

\begin{lemma}\label{Cohomlemma}
Suppose $T$ is any commutative ring, $f\in T$, $I=(f_1,\dots,f_n)$ is a finitely generated ideal containing a power of $f$. Let $M$ be a $T$-module such that $f$ is a nonzerodivisor on $M$. Then $H_I^i(M)\to H_I^i(\widehat{M})$ is an isomorphism.
%\footnote{Here, our ideal $I$ is finitely generated, and all local cohomology modules are defined using \v{C}ech complex.} is an isomorphism for every $i$, where $\widehat{M}$ denotes the $f$-adic completion of $M$.
%Let $R$ be a Noetherian integral domain of mixed characteristic, i.e., $p\in J(R)$.
%Let $I$ be an ideal of $R$ which contains a power of $p$.
%Then the natural map $H_I^i(R^+)\to H_I^i(R^{++})$ is an isomorphism for all $i$.
\end{lemma}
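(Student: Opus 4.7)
The plan is to show that the kernel and cokernel of the natural map $M\to\widehat{M}$, where $\widehat{M}$ denotes the $f$-adic completion $\varprojlim_n M/f^nM$, are $T[1/f]$-modules, and then to use that $R\Gamma_I$ annihilates every $T[1/f]$-module because $f\in\sqrt{I}$.

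To produce the kernel and cokernel, I will present $T[1/f]$ as the filtered colimit $\colim(T\xrightarrow{f}T\xrightarrow{f}\cdots)$; this realizes $T[1/f]$ as a flat $T$-module of projective dimension at most one, so $\Ext_T^i(T[1/f],M)=0$ for $i\geq 2$. Applying $R\Hom_T(-,M)$ yields the four-term exact sequence
$$0\to \Hom_T(T[1/f],M)\to M\to \widehat{M}\to \Ext_T^1(T[1/f],M)\to 0.$$
The identification of the third term with $\widehat{M}$ amounts to saying that the honest completion agrees with the derived $f$-completion $R\varprojlim_n (M\otimes_T^L T/f^n)$; this is precisely where we use that $f$ is a nonzerodivisor on $M$, since this ensures $M\otimes_T^L T/f^n\simeq M/f^nM$ is concentrated in degree zero and the resulting tower has surjective transition maps (so $R\varprojlim=\varprojlim$).

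Both $K:=\Hom_T(T[1/f],M)$ and $C:=\Ext_T^1(T[1/f],M)$ inherit a $T[1/f]$-module structure from the source. I claim $R\Gamma_I(N)=0$ for every $T[1/f]$-module $N$. Since $f\in\sqrt{I}$, we have $\sqrt{I}=\sqrt{I+(f)}$, so $R\Gamma_I\simeq R\Gamma_I\circ R\Gamma_{(f)}$ on the derived category. On a $T[1/f]$-module the element $f$ acts invertibly, whence $\Gamma_{(f)}(N)=0$, and the localization sequence $0\to N\to N[1/f]\to H^1_{(f)}(N)\to 0$ collapses (because $N=N[1/f]$) to give $H^1_{(f)}(N)=0$. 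Consequently $R\Gamma_{(f)}(N)=0$, and therefore $R\Gamma_I(N)=0$.

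To conclude, I will split the four-term sequence into short exact sequences $0\to K\to M\to M'\to 0$ and $0\to M'\to \widehat{M}\to C\to 0$, and pass to the associated long exact sequences of $I$-local cohomology. The vanishing of $H^i_I(K)$ and $H^i_I(C)$ established in the previous step forces every connecting map to be an isomorphism, yielding $H^i_I(M)\xrightarrow{\sim} H^i_I(\widehat{M})$ for every $i$. The main technical subtlety is the bookkeeping in the first step, namely the identification of the honest $f$-adic completion with its derived analogue, where the nonzerodivisor hypothesis is essential; the remainder is a formal instance of the fact that ideals whose radical contains $f$ see nothing of the category of $T[1/f]$-modules.
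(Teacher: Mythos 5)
Your argument is correct, but it is a genuinely different proof from the one in the paper. The paper deliberately avoids derived completion: it first checks that $f$ stays a nonzerodivisor on $\widehat M$, then writes $H_I^i$ as a direct limit of Koszul cohomologies $H^i(f_1^t,\dots,f_n^t;-)$ and uses the identity $H^i(f_1^t,\dots,f_n^t;M)\cong H^{i-1}(f_1^t,\dots,f_{n-1}^t;M/f^tM)$ (valid because $f=f_n$ is a nonzerodivisor on both $M$ and $\widehat M$), so that the two sides agree term by term since $M/f^tM=\widehat M/f^t\widehat M$. You instead identify the kernel and cokernel of $M\to\widehat M$ with $\Hom_T(T[1/f],M)$ and $\Ext_T^1(T[1/f],M)$, observe that these are $T[1/f]$-modules, and kill them with $R\Gamma_I$ because $f\in\sqrt I$; this is in substance the derived-completion argument the authors allude to when citing \cite{GM}, made self-contained. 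Your route is more conceptual and skips the preliminary check that $f$ remains a nonzerodivisor on $\widehat M$; the paper's route is more elementary and stays entirely within Koszul/\v{C}ech complexes.

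One small repair is needed in your justification of the four-term sequence: you assert $M\otimes_T^{L}T/f^n\simeq M/f^nM$ solely because $f$ is a nonzerodivisor on $M$, but since $T$ is arbitrary this can fail --- for instance $T=k[x,y]/(xy)$, $f=x$, $M=T/(y)$ has $\Tor_2^T(M,T/f^n)\neq 0$ --- because the resolution $T\xrightarrow{f^n}T$ of $T/f^n$ requires $f^n$ to be a nonzerodivisor on $T$. The fix is painless: compute the derived completion with the Koszul complexes $K(T;f^n)=(T\xrightarrow{f^n}T)$, for which $M\otimes_TK(T;f^n)\simeq M/f^nM$ does follow from $M$-regularity of $f$; or bypass derived completion entirely by applying the six-term $\varprojlim$--$\varprojlim^1$ sequence to the inverse system of short exact sequences $0\to (M,\cdot f)\to (M,\id)\to (M/f^nM)\to 0$ (exact at each level precisely because $f$ is a nonzerodivisor on $M$), whose outer terms are $\Hom_T(T[1/f],M)$ and $\Ext_T^1(T[1/f],M)$ by the telescope presentation of $T[1/f]$. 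With that adjustment your proof is complete as written.
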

\begin{proof}

We may assume that $f\in I$, say $f=f_n$. Note that $f$ is a nonzerodivisor on $\widehat{M}$.
To see this, suppose $fg=0$ in $\widehat{M}$ with $g\neq 0$.  We may write $g=g_1+f^ng_2$ where $g_1\in M-f^nM$ and $g_2\in \widehat{M}$.  Then the image of $fg_1$ in $\widehat{M}$ is in $f^{n+1}\widehat{M}$, so $fg_1\in f^{n+1}M$.
Then $g_1\in f^nM$ since $f$ is a nonzerodivisor on $M$, a contradiction.

By the relation between Koszul cohomology and local cohomology \cite[Theorem 3.5.6]{BH}, it suffices to prove the corresponding isomorphism for Koszul cohomology with respect to $f_1^t,\dots,f_n^t$ for all $t$. But since $f=f_n$ is a nonzerodivisor on both $M$ and $\widehat{M}$,
 by  \cite[Corollary 1.6.13]{BH}, we have
\begin{eqnarray*}
H^i(f_1^t,\dots,f_n^t; M)&\cong& H^{i-1}(f_1^t,\dots,f_{n-1}^t; M/f^tM)\\
&=&H^{i-1}(f_1^t,\dots,f_{n-1}^t; \widehat{M}/f^t\widehat{M})\\
&\cong& H^i(f_1^t,\dots,f_n^t; \widehat{M}).
\end{eqnarray*}
This finishes the proof.
\end{proof}

We can now give an affirmative answer to Question~\ref{Q}.

\begin{thm} \label{Cohom}
Let $(S,\n)$ be a complete local domain of mixed characteristic of dimension $d$ {that has an F-finite residue field}.
Then there exists $c\in S$ such that the natural map  $c^\epsilon H_{\n }^i(S)\to H_{\n }^i(S^+)$ is zero for all $i<d$ and all $\epsilon>0$.
More generally, if $I$ is any ideal of $S$ which contains a power of $p$, the natural map  $c^\epsilon H_I^i(S)\to H_I^i(S^+)$ is zero for all $i<h=\height I$ and all $\epsilon>0$.
\end{thm}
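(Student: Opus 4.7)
The plan is to reduce to the setup of Notation~\ref{Notation} by Cohen's structure theorem, exploit almost flatness of $\Bm^\circ$ to show that $H_I^i(\Bm^\circ)$ is $(pg)^{1/p^\infty}$-almost zero for $i<h$, and then transfer this vanishing along $S\to\Bm^\circ\to S^{pg}\simeq S^{++}\cong S^+$ (the last two holding as an almost isomorphism and as an isomorphism after applying $H_I^i$, respectively). We will take $c=pg$. Concretely, by Cohen's structure theorem we pick an unramified complete regular local subring $R\subseteq S$ of dimension $d$ with the same F-finite residue field and with $S$ module-finite over $R$; we then apply Notation~\ref{Notation}, choosing $g\in R$ with $\height_R(p,g)=2$ and $R[(pg)^{-1}]\to S[(pg)^{-1}]$ finite \'etale. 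This produces $\Bm^\circ$, which is $(pg)^{1/p^\infty}$-almost flat over $R$ by Lemma~\ref{Bmalmostflat}, together with a $(pg)^{1/p^\infty}$-almost map $\Bm^\circ\to S^{pg}$ whose composition with $S\to\Bm^\circ$ reproduces the natural map $S\to S^{pg}$ (Lemma~\ref{almost map}).

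To establish the core vanishing, set $J=I\cap R$. The module-finite injection $R/J\hookrightarrow S/I$ yields $\dim R/J=\dim S/I$, and combined with $\dim R=\dim S=d$ and catenarity this forces $\height_R J=h$. By prime avoidance in the Cohen-Macaulay ring $R$, pick an $R$-regular sequence $x_1,\ldots,x_h\in J\subseteq I$. Since $K_\bullet(x_1^t,\ldots,x_h^t;R)$ is a free resolution of $R/(x_1^t,\ldots,x_h^t)$ over $R$ and $\Tor_i^R(-,\Bm^\circ)$ is $(pg)^{1/p^\infty}$-almost zero for $i\geq 1$, the Koszul homology $H_i(K_\bullet(x_1^t,\ldots,x_h^t;\Bm^\circ))$ is almost zero for $i\geq 1$; by the self-duality $H^j(K^\bullet)\cong H_{h-j}(K_\bullet)$, the Koszul cohomology $H^j(K^\bullet(x_1^t,\ldots,x_h^t;\Bm^\circ))$ is almost zero for $j<h$ and every $t\geq 1$. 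Taking the direct limit, $H_{(x_1,\ldots,x_h)S}^j(\Bm^\circ)$ is almost zero for $j<h$. Now $I\supseteq (x_1,\ldots,x_h)S$ gives $\Gamma_I=\Gamma_I\circ\Gamma_{(x_1,\ldots,x_h)S}$, so a Grothendieck spectral sequence $E_2^{p,q}=H_I^p(H_{(x_1,\ldots,x_h)S}^q(\Bm^\circ))\Rightarrow H_I^{p+q}(\Bm^\circ)$ has $E_2^{p,q}$ almost zero for $q<h$ and strictly zero for $q>h$ (the \v{C}ech complex on $h$ elements has length $h$). Every filtration factor of $H_I^i(\Bm^\circ)$ for $i<h$ is therefore almost zero, and since extensions of almost-zero modules are almost zero, $H_I^i(\Bm^\circ)$ itself is $(pg)^{1/p^\infty}$-almost zero for $i<h$.

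To conclude, the image of $H_I^i(S)\to H_I^i(\Bm^\circ)$ is almost zero, so for every $\epsilon>0$ the composition $(pg)^\epsilon H_I^i(S)\to H_I^i(\Bm^\circ)\to H_I^i(S^{pg})$ (the second arrow via the almost map of Lemma~\ref{almost map}) is zero. The $(pg)^{1/p^\infty}$-almost isomorphism $S^{pg}\simeq S^{++}$ from Lemma~\ref{new complete} transfers this to $H_I^i(S^{++})$; finally, Lemma~\ref{Cohomlemma} identifies $H_I^i(S^+)\cong H_I^i(S^{++})$ (since $I$ contains a power of $p$ and $p$ is a nonzerodivisor on $S^+$), giving the desired conclusion with $c=pg$. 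The main technical hurdle is bookkeeping in the Grothendieck spectral sequence — verifying that almost-zero rows at $E_2$ really propagate to almost vanishing of the abutment via the closure of almost-zero modules under extensions — together with the careful interpretation of the almost map on local cohomology so that the composition $H_I^i(S)\to H_I^i(\Bm^\circ)\to H_I^i(S^{pg})$ agrees up to almost zero with the genuine map $H_I^i(S)\to H_I^i(S^{pg})$.
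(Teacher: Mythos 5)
Your proposal is correct and follows essentially the same route as the paper: reduce to Notation~\ref{Notation} with $c=pg$, show $H_I^i(\Bm^\circ)$ is $(pg)^{1/p^\infty}$-almost zero for $i<h$ using almost flatness over $R$, then transfer the vanishing through the almost map $\Bm^\circ\to S^{pg}\simeq S^{++}$ and identify $H_I^i(S^+)\cong H_I^i(S^{++})$ via Lemma~\ref{Cohomlemma}. The only difference is minor and cosmetic: where you invoke a composite-functor spectral sequence to pass from $H^\bullet_{(x_1,\ldots,x_h)}(\Bm^\circ)$ to $H^\bullet_I(\Bm^\circ)$, the paper takes Koszul cohomology on a full generating set of $I$ containing the ($(pg)^{1/p^\infty}$-almost) regular sequence and uses depth sensitivity before passing to the direct limit.
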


\begin{proof}
As usual, we may view $S$ as a finite module over a complete unramified regular local ring $(R,\m)$ with $\m=\n\cap R$.
We now can invoke Notation~\ref{Notation}.
%By Lemma~\ref{Bmalmostflat}, $\Bm^\circ$ is almost flat over $R$.
As $\height (I\cap R)=h$ and $I$ contains $p^n$ for some $n$, by Theorem~\ref{andre} we can pick $y_1=p^n, y_2,\dots,y_h$ in $R$ such that $y_1,\dots,y_h$ is a $(pg)^{1/p^\infty}$-almost regular sequence on $\Bm^\circ$. Let $(y_1,\dots,y_h, z_1,\dots,z_s)$ be a generating set of $I$. Since $y_1,\dots,y_h$ is a $(pg)^{1/p^\infty}$-almost regular sequence on $\Bm^\circ$, the Koszul cohomology $H^i(\underline{y},\underline{z}, \Bm^\circ)$ is $(pg)^{1/p^\infty}$-almost zero for all $i <h$. Taking a direct limit, we find that $(pg)^{1/p^\infty}H_I^i(\Bm^\circ)=0$ for $i<h$.
By Lemmas \ref{almost map} and \ref{new complete}, we have a $(pg)^{1/p^\infty}$ almost map  $\Bm^\circ\to R^{++}$.
This gives a commutative diagram of ring maps and a corresponding diagram of local cohomology maps for any $\epsilon$:
\[\xymatrix{
 H_I^i(S) \ar[r] \ar[d] & H_I^i(S^+)\ar[r]^{\cdot(pg)^\epsilon} & H_I^i(S^+) \ar[d]^\alpha  \\
 H_I^i(\Bm^\circ) \ar[rr]^{\cdot(pg)^\epsilon}  &  {} & H_I^i(S^{++})
}
\]
As the composition $H_I^i(S)\to H_I^i(\Bm^\circ)\to H_I^i(S^{++})$ is the zero map, and $\alpha$ is an isomorphism by Lemma~\ref{Cohomlemma} applied to $T=M=R^+$ and $f=p$, the theorem holds with $c=pg$.
\end{proof}

Now we see that $H_\n^i(S^+)$ is almost zero in the same way that we got almost zero in Theorem~\ref{Tor}.

\begin{cor}
Let $(S,\n)$ be a complete local domain of mixed characteristic {that has an F-finite residue field}.
Suppose $i<n$ and $z\in H_\n^i(S^+)$.
Then there exists $c\in S$ (which depends on $z$) such that $c^\epsilon z=0$ for every $\epsilon>0$.
\end{cor}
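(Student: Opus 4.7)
The plan is to reduce the statement to Theorem~\ref{Cohom} by representing $z$ as coming from a finite subextension of $S\subseteq S^+$. Since local cohomology with respect to a finitely generated ideal commutes with filtered colimits, and $S^+$ is the filtered union of its module-finite domain extensions $T$ of $S$ (taken inside a fixed algebraic closure of the fraction field of $S$), we have $H_\n^i(S^+)=\colim_T H_\n^i(T)$. Hence the given element $z$ is the image of some $z_T\in H_\n^i(T)$ for a suitable such $T$.

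I would next verify that every such $T$ is itself a complete local domain with F-finite residue field: since $S$ is Henselian (being complete local) and $T$ is a module-finite domain extension of $S$, $T$ has a unique maximal ideal $\n_T$; it is complete and Noetherian as a finite module over $S$; and its residue field is a finite extension of the F-finite residue field of $S$, hence F-finite. Moreover $\dim T=\dim S$ and $T^+=S^+$. Since $\sqrt{\n T}=\n_T$, the functors $\Gamma_\n$ and $\Gamma_{\n_T}$ agree on $T$-modules, so $H_\n^i(T)=H_{\n_T}^i(T)$; the same holds for $S^+=T^+$, giving $H_\n^i(S^+)=H_{\n_T}^i(T^+)$. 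Applying Theorem~\ref{Cohom} to $T$ then produces a nonzero $c_T\in T$ such that the composition $H_{\n_T}^i(T)\xrightarrow{\cdot c_T^\epsilon} H_{\n_T}^i(T)\to H_{\n_T}^i(T^+)$ is zero for every $i<\dim T$ and every $\epsilon>0$. Tracing $z_T$ through, this yields $c_T^\epsilon z=0$ in $H_\n^i(S^+)$ for every $\epsilon>0$.

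The only remaining step, which is the sole mild obstacle, is to replace $c_T\in T$ by an element of $S$. For this I would use that $c_T\neq 0$ is integral over $S$: choose an integral relation $c_T^m+s_{m-1}c_T^{m-1}+\cdots+s_0=0$ with $s_i\in S$ and $s_0\neq 0$ (this is possible because $T$ is a domain, so one can divide out powers of $c_T$ until the constant term is nonzero). Setting $c:=s_0\in S$, we have $c=c_T\cdot u$ for $u=-(c_T^{m-1}+s_{m-1}c_T^{m-2}+\cdots+s_1)\in T$. Fix compatible systems of $p$-power roots of $c_T$ and of $u$ in $S^+$; then $(c_T^{1/p^n}u^{1/p^n})_n$ is a compatible system of $p$-power roots of $c$, and any other such system differs from it by a compatible system of $p$-power roots of unity, which are units in $S^+$. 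Hence, up to a unit, $c^\epsilon z=u^\epsilon(c_T^\epsilon z)=0$ for every $\epsilon>0$, completing the proof.
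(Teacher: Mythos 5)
Your proof is correct and takes essentially the same route as the paper: realize $S^+$ as the filtered union of module-finite domain extensions $T$ of $S$ (each a complete local domain with F-finite residue field and $T^+=S^+$), pull $z$ back to $H^i_{\n_T}(T)$, and apply Theorem~\ref{Cohom} to $T$. Your closing step descending $c_T\in T$ to $c=s_0\in S$ via an integral equation is a careful treatment of a point the paper leaves implicit; alternatively one can take the regular base ring for $T$ to be the same $R\subseteq S$ used for $S$, so that the theorem already yields $c=pg\in R\subseteq S$.
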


\begin{proof}
This follows immediately from the preceding theorem because $S^+$ is just a direct union of finite extensions $S_j$ of $S$ and we can apply the preceding theorem to each $S_j$.
\end{proof}

The next theorem follows from Corollary~\ref{CC} and Theorem~\ref{4.4}.\footnote{ When the residue field of $R$ is not necessarily F-finite, we can take a faithfully flat extension $R\to R'$ such that $R'$ is a complete regular local ring with F-finite residue field.  Corollary~\ref{CC} and Theorem~\ref{4.4} then imply $(IR')^{epf}=IR'$ and so $I^{epf}\subseteq (IR')^{epf}\cap R=IR'\cap R=I$.}
%{\color{green} Do we need to elaborate at all here?  There is a small gap as Theorem~\ref{4.4} technically requires the result for all complete local rings. However, as Mel noted, one can reach F-finite after a faithfully flat extension..}
However, as our proof relies heavily on Andr\'{e}'s very deep perfectoid Abhyankar lemma, we would like to offer a more direct proof based on our earlier work on the vanishing conjecture for maps of Tor.

\begin{thm}\label{RLR}
Let $R$ be a regular local ring of mixed characteristic and let $I$ be an ideal of $R$.
Then $I^{epf}=I$.
\end{thm}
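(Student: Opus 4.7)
The plan is to reduce Theorem~\ref{RLR} to the combination of Corollary~\ref{CC} and Theorem~\ref{4.4} already established in the paper. Theorem~\ref{4.4} tells us that the assertion $I^{epf}=I$ for every ideal of a regular local ring of mixed characteristic follows once one knows the colon-capturing property of extended plus closure for integral domains that are finite extensions of complete regular local rings, and Corollary~\ref{CC} supplies exactly this colon-capturing in the presence of an F-finite residue field. So the argument has two layers: settle the complete F-finite case, then bootstrap to an arbitrary regular local ring by faithful flatness.

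First I would treat the case where $R$ is a complete regular local ring with F-finite residue field. Here the conclusion is immediate: given any module-finite domain extension $S$ of $R$, Corollary~\ref{CC} yields
$((y_1,\ldots,y_{n-1})S : y_n)\subseteq (y_1,\ldots,y_{n-1})S^{epf}$
for every set of parameters $y_1,\ldots,y_n$ in $S$, which is exactly the colon-capturing hypothesis of Theorem~\ref{4.4}. That theorem then returns $I^{epf}=I$ for every ideal $I$ of $R$.

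Next I would remove the completeness and F-finite hypotheses by means of a faithfully flat extension. Given an arbitrary regular local ring $R$ of mixed characteristic with residue field $k$, build a complete regular local ring $R'$ with F-finite residue field $k'\supseteq k$ (for instance $k'=k^{1/p^\infty}$) such that $R\to R'$ is faithfully flat: first pass to $\widehat{R}$, then tensor along the coefficient-ring extension $W(k)\to W(k')$ and $p$-adically complete. Choose a compatible embedding of absolute integral closures $R^+\hookrightarrow (R')^+$ extending $R\hookrightarrow R'$. Then for any $x\in I^{epf}$ the defining witnesses $c^\epsilon x\in (I,p^N)R^+$ transport directly to $c^\epsilon x\in (IR',p^N)(R')^+$, so $x\in (IR')^{epf}$ viewed in $R'$. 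Applying the first step to $R'$ yields $(IR')^{epf}=IR'$, and faithful flatness gives $IR'\cap R=I$. Combined, $I^{epf}\subseteq IR'\cap R=I$, while the reverse inclusion is Proposition~\ref{elem}(2).

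The only real point to verify carefully is the existence of the faithfully flat reduction $R\to R'$ with all the claimed properties, together with the functoriality $I^{epf}\subseteq (IR')^{epf}$; both are standard, the former from Cohen's structure theorem and the flatness of $W(k)\to W(k')$ for field extensions $k\to k'$, the latter from the compatibility of absolute integral closures under embeddings of fraction fields. Neither step is the heavy lifting; the deep input is entirely packaged in Corollary~\ref{CC}, which rests on Andr\'{e}'s perfectoid Abhyankar lemma. The alternative direct proof alluded to in the paper substitutes the authors' earlier vanishing of maps of Tor for the Andr\'{e}-based machinery, but follows the same two-layer template.
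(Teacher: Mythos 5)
Your argument is correct in substance, but it is not the proof the paper actually writes out for Theorem~\ref{RLR}; it is the alternative derivation that the authors state in one sentence and a footnote (``the next theorem follows from Corollary~\ref{CC} and Theorem~\ref{4.4}'', with the faithfully flat passage to an F-finite residue field handling the general case) and then deliberately set aside. The proof given in the body is genuinely different: using the vanishing conjecture for maps of Tor \cite[Theorem 1.3]{HM}, the authors show that for every nonzero $c\in R$ some map $R\xrightarrow{\cdot c^{1/p^e}}R^+$ is pure --- via the socle element of $H_\m^d(R)$, the containment $((x_1^{t+1},\dots,x_d^{t+1})R^+:(x_1\cdots x_d)^t)\subseteq\overline{(x_1,\dots,x_d)R^+}$, and the valuation criterion --- and purity immediately gives $x\in(I,p^N)R$ for all $N$, hence $x\in I$. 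Your route buys economy, since all the hard work is already packaged in Corollary~\ref{CC} and in \cite[Theorem 4.4]{H2}, whereas the paper's written proof buys independence of Theorem~\ref{RLR} from Andr\'e's perfectoid Abhyankar lemma. Two caveats on your version. First, Theorem~\ref{4.4} as literally stated assumes colon-capturing for finite extensions of \emph{all} complete regular local rings, while Corollary~\ref{CC} only delivers it when the residue field is F-finite; as the paper's remark and footnote implicitly use, the proof of Theorem~\ref{4.4} applied to your $R'$ only ever invokes finite domain extensions of complete regular local rings with the same (F-finite) residue field, so the application is legitimate, but this should be said. Second, your construction of $R'$ needs a small repair: the $p$-adic completion of $\widehat{R}\otimes_{W(k)}W(k')$ is not $\m$-adically complete when $k'/k$ is infinite (mod $p$ every element has coefficients in a finite-dimensional $k$-subspace of $k'$), so it is not yet a complete local ring; take instead the $\m$-adic completion, or more simply use the standard flat local extension $(R,\m,k)\to(R_1,\m R_1,k')$ with prescribed residue field $k'$ and set $R'=\widehat{R_1}$, which is complete, regular (its maximal ideal is $\m R'$), faithfully flat over $R$, and has perfect residue field if $k'=k^{1/p^\infty}$. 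With these adjustments, your transport of witnesses through a compatible embedding $R^+\hookrightarrow(R')^+$, the equality $(IR')^{epf}=IR'$ from the complete F-finite case, and $IR'\cap R=I$ by faithful flatness, together with Proposition~\ref{elem}(2), complete the proof exactly along the lines the paper sketches but does not elect to present.
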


\begin{proof}
We first claim that for every $c\neq 0$ in $R$, there exists $e$ such that the natural map $R\to R^+$ sending $1$ to $c^{1/p^e}$ is pure. We prove this by contradiction. Suppose there exists $c\neq 0$ such that for every $e$, the map $R\to R^+$ sending $1\to c^{1/p^e}$ is not pure. Let $x_1,\dots,x_d$ be a regular system of parameters of $R$. Since $R\xrightarrow{\cdot c^{1/p^e}} R^+$ is not pure, the map $H_\m^d(R)\xrightarrow{\cdot c^{1/p^e}} H_\m^d(R^+)$ sends the socle element of $H_\m^d(R)$ to $0$, i.e., there exists $t$ (depending on $e$) such that
\begin{equation}\label{c contained in colon}
c^{1/p^e}(x_1x_2\cdots x_d)^t\in (x_1^{t+1}, x_2^{t+1},\dots, x_d^{t+1})R^+.
\end{equation}
Next we claim that for every $t>0$,
\begin{equation}\label{lim closure contained in integral closure}
((x_1^{t+1}, x_2^{t+1},\dots, x_d^{t+1})R^+:(x_1x_2\cdots x_d)^t)\subseteq \overline{(x_1,\dots,x_d)R^+}.
\end{equation}
To see this, suppose $u(x_1x_2\cdots x_d)^t\in (x_1^{t+1}, x_2^{t+1},\dots, x_d^{t+1})R^+$ but $u$ is not in $\overline{(x_1,\dots,x_d)R^+}$. Then there exists a domain $S$ module-finite over $R$ such that $u(x_1x_2\cdots x_d)^t\in (x_1^{t+1}, x_2^{t+1},\dots, x_d^{t+1})S$. Say
$$u(x_1x_2\cdots x_d)^t=r_1x_1^{t+1}+\cdots +r_dx_d^{t+1}.$$
Then $(r_1,\dots,r_d, -u)$ represents a class in $$\Tor_1^R(\frac{R}{(x_1^{t+1},\dots,x_d^{t+1}, (x_1x_2\cdots x_d)^t)}, S).$$ Since $u\notin\overline{(x_1,\dots,x_d)R^+}$, $u\notin\overline{(x_1,\dots,x_d)S}$. By the valuation criterion for integral closure, we know that there exists a mixed characteristic DVR $V$ such that $u\notin (x_1,\dots,x_d)V$. This means $(r_1,\dots,r_d, -u)$ represents a nonzero class in $$\Tor_1^R(R/(x_1^{t+1},\dots,x_d^{t+1}, (x_1x_2\cdots x_d)^t), V).$$
But this contradicts the vanishing conjecture for maps of Tor in mixed characteristic \cite[Theorem 1.3]{HM} applied to $R\to S\to V$. Now (\ref{c contained in colon}) and (\ref{lim closure contained in integral closure}) shows that $c^{1/p^e}\in \overline{(x_1,\dots,x_d)R^+}$ for all $e>0$. But this is clearly impossible by computing valuations.

Finally, suppose $x\in I^{epf}$, then for some $c\neq 0$, $c^\epsilon x\in (I, p^N)R^+$ for every $N, \epsilon$. Since the map $R\to R^+$ sending $1\to c^{1/p^e}$ is pure for some $e$, we know that $x\in (I,p^N)R$ for every $N$. This implies $x\in I$ and hence $I^{epf}=I$.
\end{proof}

We also have completed a closure-based proof of the Brian\c con-Skoda Theorem.

\begin{thm} \label{L} \cite{LS}
Let $R$ be a regular ring and let $I$ be an ideal of $R$ generated by $n$ elements.
Then the integral closure of $I^{d+n}$ is contained in $I^{d+1}$.
\end{thm}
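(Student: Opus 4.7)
The plan is to reduce the statement to the case of a complete regular local ring, where it follows by combining the Brian\c con-Skoda-type theorem for extended plus closure (\cite[Theorem 4.2]{H2}) with Theorem~\ref{RLR}.

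First I would localize and then complete. Integral closure of an ideal commutes with localization, and ideal membership is a local property, so it suffices to check $\overline{I^{d+n}} \subseteq I^{d+1}$ after localizing at each maximal ideal $\m$; this reduces to the case where $R$ is regular local. Next, by faithful flatness of $R \to \widehat{R}$, I would pass to the completion: if $y \in \overline{I^{d+n}}$, then $y \in \overline{(I\widehat R)^{d+n}}$ inside $\widehat R$, and since $I^{d+1}\widehat R \cap R = I^{d+1}$, proving the containment in $\widehat R$ suffices.

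So assume $R$ is a complete regular local ring. If $R$ has mixed characteristic $p$, then \cite[Theorem 4.2]{H2} gives $\overline{I^{d+n}} \subseteq (I^{d+1})^{epf}$, and Theorem~\ref{RLR} identifies the right-hand side with $I^{d+1}$. If $R$ has equal characteristic $p > 0$, the same outline applies: Proposition~\ref{BS} still holds since $p = 0 \in \sqrt{(x_1,x_2)R}$ is trivially satisfied, so the Brian\c con-Skoda-type bound $\overline{I^{d+n}} \subseteq (I^{d+1})^{epf}$ remains valid, and by Proposition~\ref{TC} the epf closure coincides with tight closure, which is well known to act trivially on ideals of a regular ring. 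The equal characteristic zero case can then be deduced from the positive characteristic case by the standard Hochster-Huneke reduction modulo $p$.

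The substantive work is already concentrated in Theorem~\ref{RLR}, which in turn rests on the perfectoid-based colon-capturing result Corollary~\ref{CC}; given those, Theorem~\ref{L} is an essentially formal consequence. I expect no genuine obstacle in the reduction and assembly steps; the only mild subtlety is ensuring that the hypotheses of Proposition~\ref{BS} and \cite[Theorem 4.2]{H2}, phrased with mixed characteristic in view, also cover the equal characteristic $p$ situation, as noted above.
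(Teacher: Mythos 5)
Your proposal is correct and takes essentially the same route as the paper: reduce to the regular local case, dispose of equal characteristic by known Hochster--Huneke results, and in mixed characteristic combine the extended plus closure Brian\c con-Skoda bound (\cite[Theorem 4.2]{H2}, via Proposition~\ref{BS}) with Theorem~\ref{RLR}. The only differences are cosmetic: your passage to the completion is unnecessary (Theorem~\ref{RLR} is stated for arbitrary regular local rings of mixed characteristic), and the paper simply cites Hochster--Huneke for both equal characteristic cases rather than rerouting characteristic $p$ through Proposition~\ref{TC} and characteristic $0$ through reduction mod $p$.
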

\begin{proof}
It suffices to prove the result locally and so we can assume that we have either an equicharacteristic regular local ring or a mixed characteristic regular local ring.
Hochster and Huneke have proved the result in the equal characteristic case.
See \cite [Theorem 5.4]{HH1} for equal characteristic $p$ case and \cite{HH2} for equal characteristic $0$ case (or \cite[section 9.6]{L} for generalizations in the equal characteristic $0$ case using multiplier ideals). We can now handle the mixed characteristic case.
By Theorem~\ref{BS}, we know that the integral closure of ${I^{d+n}}$ is contained in $(I^{d+1})^{epf}$.
Theorem~\ref{RLR} completes the proof.
\end{proof}

\bigskip\bigskip

\end{document}